\def\eqref#1{(\ref{#1})}
\def\mR{\mathbb{R}}
\def\J1inv{\widehat{J}_0}
\begin{document}

\title{Affine Map Equivalence versus Critical Set Equivalence for Quadratic Maps of the Plane}

%\author{Chia-Hsing Nien\footnotemark[2], Bruce B.\ Peckham\footnotemark[3] \and Richard P. McGehee\footnotemark[4]}
\author{Chia-Hsing Nien (Providence University, Taiwan), Bruce B. Peckham (University of Minnesota Duluth), Richard McGehee (University of Minnesota Twin Cities)}

\pagestyle{myheadings}
\thispagestyle{plain}
\markboth{\sc Nien, Peckham \& McGeghee}
{\sc Equivalences for quadratic maps of the plane}
\maketitle

\begin{center}
%DRAFT as of 
\today
\end{center} 

\renewcommand{\thefootnote}{\fnsymbol{footnote}}

%\footnotetext[4]{Department of Mathematics and Statistics, University of Minnesota Duluth, Duluth, MN 55812, USA.}
%\footnotetext[2]{Department of Mathematics, University of Auckland, Private Bag 92019 Auckland 1142 New Zealand} 

\renewcommand{\thefootnote}{\fnsymbol{footnote}}
\footnotetext[2]{Department of Financial and Computational Mathematics, Providence University, Taichung City 43301 Taiwan}
\footnotetext[3]{Department of Mathematics and Statistics, University of Minnesota Duluth, Duluth, MN 55812, USA}
\footnotetext[4]{School of Mathematics, University of Minnesota, Minneapolis, MN 55455, USA} 

\renewcommand{\thefootnote}{\arabic{footnote}}

\begin{abstract}
In recent work \cite{NPM}, the authors enumerated a classification of quadratic maps of the plane according to their critical sets and images.
It is straightforward to show that quadratic maps which are affinely map equivalent are also equivalent in the critical set classification.
The question remained whether maps that are equivalent in the critical set classification are also affinely map equivalent.
%It was not even clear whether the set of affine map equivalence classes was finite or infinite.
This paper establishes a complete enumeration of the affine map equivalence classes.
As a consequence, the relationship between affine map equivalence and critical set equivalence is established.
In short, there are eighteen affine map equivalence classes.
Three pairs of those classes have critical sets and images that match, but each pair has some other geometric property, preserved by affine map equivalence, that does not match.
The other twelve affine map equivalence classes and the critical set equivalences are in one-to-one correspondence.

\end{abstract}

%\begin{keywords} Discrete-time system, noninvertible planar map, H%map, bifurcation. 
%\end{keywords}
%\begin{AMS} \parbox[t]{8cm}{37D10, 37M20, 65P30.} [these are straight from \cite{eko-siads}] or 37E99, 37G05, 34C23 ???}
%\end{AMS}

\pagestyle{myheadings}
\thispagestyle{plain}

\markboth{\sc C-H Nien, B B\ Peckham, and R P McGehee}{\sc Affine map equivalence vs critical set equivalence for quadratic maps}

%%%%%%%%%%%%%%%%%%%%%%%%%%%%%%%%%%%%%%%%%%%%%%%%%%%%%%%%%%%%%%%
\section{Introduction}\label{s-Introduction}

We are interested in studying the dynamics of quadratic maps of the real plane $Q:\mR^2\rightarrow \mR^2$.
The most general quadratic family has twelve coefficient parameters:
\begin{align}
\nonumber
Q(x,y)=(a_{20}x^2&+a_{11}xy+a_{02}y^2+a_{10}x+a_{01}y+a_{00},\\ &b_{20}x^2+b_{11}xy+b_{02}y^2+b_{10}x+b_{01}y+b_{00})
\label{eq-Q}
\end{align}
where at least one of the six quadratic coefficients is nonzero.

A complete description of the dynamics and bifurcations in this class would be an ambitious project since there is a huge variety of extremely complicated behaviors for quadratic planar maps.
This might seem surprising at first, but maybe not so much when one
realizes the effort expended in analyzing the dynamics of the Henon maps and the complex quadratic maps, both special subsets of quadratic maps of the plane.

In recent work \cite{NPM} (see also Nien's thesis \cite{Nienthesis}), we pursued a more modest goal, obtaining a complete classification, but based only on the critical set and its image, rather than a much finer dynamical classification based on topological conjugacy.
In that work, some reductions were made possible by the relatively straightforward observation that maps that are affinely map equivalent
necessarily have equivalent critical sets and images.
The question of whether maps that are {\it critical set equivalent} are also {\it affinely map equivalent} was left unanswered.
The current paper establishes this result.

\subsection{Affine map equivalence} 

We begin with the definitions of the two versions of equivalence.

\begin{definition}
Let $f:X \rightarrow X$ and $g:Y \rightarrow Y$ be $C^\infty$ maps.
Then $f$ is {\bf ($C^\infty$) map equivalent} to $g$ if there exist $C^\infty$ diffeomorphisms $h: X \rightarrow Y$ and $k: X \rightarrow Y$ such that $k \circ f=g \circ h$.
If, in addition, $h$ and $k$ are (nonsingular) affine maps, then $f$ and $g$ are said to be {\bf affinely map equivalent}.
Analogous properties for $h$ and $k$ can be used to define, for example, linear, $C^k$, analytic, or polynomial map equivalence.
\label{def-mapequiv}
\end{definition}

Note the following:

\begin{enumerate}
\item Our definition of map equivalence is taken from the singularity theory described in Chapter III of Golubitsky and Guillemin \cite{GG}; what we call {\it map equivalence} they just call {\it equivalence}. Delgado, et al. \cite{DGRRV} also use the same definition, but call it  {\it geometric map equivalence}.
\item A map that is affinely map equivalent to a quadratic map is itself quadratic.
\item A map that is linearly map equivalent to a homogeneous quadratic map is itself a homogeneous quadratic map.
\end{enumerate}

A primary result in this paper (see Theorem \ref{thm-ame}) is a complete enumeration of the 18 equivalence classes under affine map equivalence.
The proof is performed by a decision tree of coordinate changes.
Most of the coordinate changes are chosen to eliminate parameters, terminating with 18 parameter-free quadratic maps.
These are the 18 representatives, one from each equivalence class.
Once representatives are identified, it is a straightforward process
to compute their critical sets, and therefore the relationship between
affine map equivalence and critical set equivalence.

\subsection{Critical set equivalence}

For any differentiable map $f:\mR^2 \rightarrow \mR^2$ define the critical set $J_0$ by
\begin{equation}
        J_0 = J_0^f= \left\{(x,y)\in \mR^2 \,|\;
        \mathit{Df}(x,y) \mbox{\ is singular} \right\}=\{(x,y):\det(Df(x,y))=0\}.
\label{eq-J0}
\end{equation}
The image $J_1 = f(J_0)$ is called the {\it critical image}.

Since the partial derivatives of a quadratic function $Q$ from eq.~(\ref{eq-Q}) are linear functions of $x$ and $y$, the determinant of the two-by-two Jacobian derivative matrix $DQ(x,y)$ is a quadratic in $x$ and $y$:
\begin{align}
\nonumber
\det&\left( DQ(x,y)\right)=\begin{vmatrix}
2a_{20}x+a_{11}y+a_{10} &a_{11}x+2a_{02}y+a_{01}\\
 2b_{20}x+b_{11}y+b_{10} &b_{11}x+2b_{02}y+b_{01}
\end{vmatrix}\\
\nonumber
&=
(2a_{20}x+a_{11}y+a_{10})(b_{11}x+2b_{02}y+b_{01})-
(a_{11}x+2a_{02}y+a_{01})(2b_{20}x+b_{11}y+b_{10})\\
\nonumber
&\equiv Ax^2+Bxy+Cy^2+Dx+Ey+F\\
&=2X_{20:11}x^2+4X_{20:02}xy+2X_{11:02}y^2+(2X_{20:01}-X_{11:10})x+(X_{11:01}-2X_{02:10})y + X_{10:01}
\label{eq-detDF}
\end{align}
where
\begin{equation}
X_{ij:kl}=a_{ij}b_{kl}-a_{kl}b_{ij}=\begin{vmatrix}a_{ij}&a_{kl}\\b_{ij}&b_{kl}
\end{vmatrix}.
\end{equation}

\noindent
Thus $J_0$ is a conic section, possibly degenerate.
This provides our classification of $J_0$: $J_0^Q$ is one of the following: ellipse, hyperbola, parabola, point, pair of intersecting lines, two parallel lines (possibly coincident), single line, all of $\mR^2$, or the empty set. Note that the coincident lines could be logically grouped either with the parallel lines (as we do below) or with a single line.

\begin{theorem}
\label{thm-J0J1}
{\bf The $J_0$-$J_1$ Classification Theorem \cite{NPM}.} 
Let $F:\mR^2 \rightarrow \mR^2$ be a quadratic map of $\mR^2$, as in eq. (\ref{eq-Q}).
Then $J_0$ and $J_1$ take on one of the following forms:

\begin{enumerate}
\item $J_0$ is empty; $J_1$ is empty
\item $J_0$ is a point; $J_1$ is a point
\item $J_0$ is an ellipse;
$J_1$ is a closed curve with three cusp points
\item $J_0$ is a hyperbola; $J_1$ consists of two curves, one smooth, and the other smooth except for a single cusp; each curve is the image of one branch of the hyperbola
\item $J_0$ is a pair of intersecting lines; $J_1$ is one of the following:
\begin{enumerate}
\item the union of two rays emanating from the same point
\item the union of a ray and a parabola, with the endpoint of the ray and the vertex of the parabola coincident.
\end{enumerate}
\item $J_0$ is a parabola; $J_1$ a curve with a single cusp

\item $J_0$ is a pair of parallel lines
\begin{enumerate}
\item if the lines are distinct,
 $J_1$ is the union of a line and a point. One of the lines in $J_0$ maps onto the line in $J_1$ and the other line maps to the point in $J_1$.
 \item if the lines are coincident, then $J_1$ is a point which is the image of the whole line $J_0$.
 \end{enumerate}
\item $J_0$ is a single line; $J_1$ is one of the following.  
\begin{enumerate}
\item a point
\item a line
\item a parabola
\end{enumerate}
%\item If $J_0$ is a coincident pair of lines, then $J_0$ is a point.
\item $J_0$ is all of $\mR^2$; $J_1$ is one of the following:
\begin{enumerate}
%\item a point
\item a line
\item a ray
\item a parabola
\end{enumerate}
\end{enumerate}
\end{theorem}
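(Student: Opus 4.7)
The plan is to split the argument in two: identify the nine possible shapes of $J_0$ using the standard classification of real conics, then for each case determine $J_1 = Q(J_0)$ by parametrizing $J_0$ and composing with $Q$. Existence of each case can be confirmed with explicit quadratic maps.

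The $J_0$ half is almost immediate. Equation (\ref{eq-detDF}) shows $\det(DQ(x,y))$ is a polynomial of degree at most two in $(x,y)$, so its real zero set is exactly one of the nine possibilities listed: empty set, single point, line, pair of distinct or coincident parallel lines, pair of intersecting lines, parabola, ellipse, hyperbola, or all of $\mR^2$. No further analysis is needed beyond the real classification of quadratic curves.

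For $J_1$ I would split into three sub-cases according to the shape of $J_0$. When $J_0$ is a smooth conic (ellipse, hyperbola, or parabola), $DQ$ has rank exactly one along $J_0$, so by Whitney's classical criterion the image is locally a smooth fold image except at points where $\ker DQ(p)$ is tangent to $J_0$ at $p$, which become cusp points of $J_1$. Since a spanning vector of $\ker DQ$ has entries linear in $(x,y)$ and the tangent direction to $J_0$ is perpendicular to the (linear) gradient of $\det DQ$, the tangency locus is cut out by a polynomial of degree at most two; intersecting this locus with the conic $J_0$ and performing a real sign-change count along each component gives three cusps on an ellipse, one on a parabola, and one on one hyperbola branch with none on the other. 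When $J_0$ is a union of lines, I restrict $Q$ to each line to obtain a quadratic map $\mR \to \mR^2$, whose image is a point, a ray, or a parabola depending on whether its leading quadratic and its linear parts vanish or are parallel; collecting these across the line components yields items 5, 7, and 8. When $J_0 = \mR^2$, then $\det(DQ) \equiv 0$ forces $DQ$ to have rank at most one everywhere, and a short algebraic computation shows that an affine change of coordinates puts $Q$ in a form in which both component functions depend only on a single quadratic $\varphi(x,y)$, so $J_1$ is the image of a one-variable quadratic and is therefore a line, a ray, or a parabola.

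The main obstacle is the precise real cusp count on the ellipse and on the two branches of the hyperbola. Complex intersection theory alone bounds the number of tangencies only by four, so producing the exact counts of $3$, $1$, and $1{+}0$ requires a more delicate argument, most cleanly a normal-form approach in which one verifies the count on a single representative per affine class and invokes the fact (noted in the introduction of the paper) that affine map equivalence preserves critical sets and images. A secondary difficulty is the $J_0 = \mR^2$ case, where the hidden factorization of $Q$ through a single quadratic function must be extracted explicitly before the image structure becomes visible.
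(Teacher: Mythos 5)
Your $J_0$ half coincides with the paper's: $\det(DQ)$ has degree at most two in $(x,y)$, so $J_0$ is one of the nine real conic loci, and nothing further is required. For $J_1$ the routes diverge. The paper does not analyze a general quadratic map directly: via Theorem~\ref{thm-ame} it reduces every quadratic map by affine changes of coordinates in domain and range to one of eighteen parameter-free representatives, computes $J_0$ and $J_1$ explicitly for each (Table~\ref{table-ame}), and transports the answer back using Lemma~\ref{lemma-JfJg} (invertible affine maps preserve ellipses, parabolas, rays, cusps, and so on). Your Whitney-criterion analysis of the smooth-conic cases is a genuinely different idea, and your treatment of the line and rank-degenerate cases by restriction and factorization is a reasonable direct strategy; what each approach buys is clear --- yours avoids the normal-form machinery, the paper's avoids all parameter-carrying computation.

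However, the direct route as you sketch it has concrete gaps. First, tangency of $\ker DQ(p)$ to $J_0$ at $p$ is necessary but not sufficient for a cusp of $J_1$: Whitney's cusp normal form requires an additional nondegeneracy condition, so ``tangency points become cusp points'' is an assertion, not a deduction. Second, and more seriously, the exact real counts --- three cusps on the ellipse image, one on the parabola image, one on one hyperbola branch and none on the other --- \emph{are} the content of the theorem in these cases, and your proposal does not derive them; it explicitly defers them to verification on one representative per affine class followed by affine invariance, which is the paper's proof rather than an alternative to it. Third, the image of a line under a quadratic map can be a point, a ray, a parabola, \emph{or a full line} (e.g.\ $(x^2,y)$ restricted to $x=0$); your stated trichotomy omits the line, which is needed for items 7(a) and 8(b). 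Finally, in the $J_0=\mR^2$ case the factoring function can be affine rather than quadratic (e.g.\ $(x^2,x)$ factors through $x$), and one must rule out images such as a half-parabola; this too is easiest to settle on the five degenerate representatives. In short, wherever the theorem's claim is quantitative your argument falls back on the normal-form reduction, so as written the proposal is not an independent proof.
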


\begin{definition}
Let $f:\mR^2 \rightarrow \mR^2$ and $g:\mR^2 \rightarrow \mR^2$ be quadratic maps, each in the form of eq.~(\ref{eq-Q}).
Then $f$ is {\bf critical set equivalent} or ``{\bf $J_0$-$J_1$ equivalent}'' to $g$ if 
they have the same $J_0$-$J_1$ classification as in Theorem \ref{thm-J0J1}.
\label{def-J0J1equiv}
\end{definition}

By Theorem \ref{thm-J0J1}, there are fifteen equivalence classes under critical set equivalence.
Note that we have chosen to distinguish in our classification a double line from a single line.
These cases can be distinguished algebraically, for example, by $\det(DF(x,y)$ being $x$ versus $x^2$.

\subsection{Comparison}

The fact that two quadratic maps being affinely map equivalent implies they are critical set equivalent is a straightforward application of the following observation.

\begin{lemma}
\label{lemma-JfJg}
Let $f$ and $g$ be quadratic maps of the plane.  If $f$ and $g$ are affinely map equivalent, then they are critical set equivalent.
\end{lemma}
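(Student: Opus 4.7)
The plan is to differentiate the conjugacy $k \circ f = g \circ h$ and use the fact that $Dh$ and $Dk$ are constant nonsingular matrices to transport the critical set and image of $f$ to those of $g$ by affine maps, and then to observe that every category in Theorem~\ref{thm-J0J1} is characterized by properties that are invariant under nonsingular affine diffeomorphisms.

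For the first step, the chain rule applied to $k \circ f = g \circ h$ gives $Dk \cdot Df(p) = Dg(h(p)) \cdot Dh$ for every $p \in \mR^2$, where $Dh$ and $Dk$ are constant invertible matrices because $h$ and $k$ are nonsingular affine maps. Taking determinants and dividing by $\det Dk$, one sees that $\det Df(p) = 0$ if and only if $\det Dg(h(p)) = 0$. Since $h$ is a bijection of $\mR^2$, this yields $h(J_0^f) = J_0^g$. Then $k(J_1^f) = k(f(J_0^f)) = g(h(J_0^f)) = g(J_0^g) = J_1^g$. Hence $J_0^g$ and $J_1^g$ are precisely the affine images of $J_0^f$ and $J_1^f$ under $h$ and $k$ respectively.

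For the second step, I would verify that every type distinguished in Theorem~\ref{thm-J0J1} is an affine invariant. Nonsingular affine maps preserve collinearity, parallelism, and the projective type of any (possibly degenerate) conic, so they carry ellipses, hyperbolas, parabolas, pairs of intersecting lines, pairs of parallel lines (coincident or distinct), single lines, points, all of $\mR^2$, and the empty set each to sets of their own type. Being $C^\infty$ diffeomorphisms, they also preserve the smooth/cusp dichotomy of parameterized curves, and their restriction to any line is an affine bijection of $\mR$, hence carries rays to rays. Combining this with the transport from the first step puts $J_0^g$, $J_1^g$ in the same row of the $J_0$-$J_1$ classification as $J_0^f$, $J_1^f$, which is exactly critical set equivalence of $f$ and $g$.

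The only mild obstacle is running through the Theorem~\ref{thm-J0J1} list to confirm that each finer distinguishing feature is an affine invariant: the two rays versus ray-plus-parabola split in item~5, the coincident-versus-distinct lines in item~7, and the single-line versus double-line distinction between items~7(b) and~8. Each reduces to preservation of collinearity, parallelism, or the algebraic multiplicity of a linear factor of $\det Df$, all of which survive composition with the affine diffeomorphisms $h$ and $k$.
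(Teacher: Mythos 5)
Your proposal is correct and follows essentially the same route as the paper: establish $J_0^g = h(J_0^f)$ and $J_1^g = k(J_1^f)$ from the equivalence $k\circ f = g\circ h$, then observe that every category in the $J_0$--$J_1$ classification is an affine invariant. You simply spell out the chain-rule/determinant computation that the paper leaves implicit, and you are a bit more careful than the paper in checking the finer distinctions (rays vs.\ parabola, coincident vs.\ distinct lines, single vs.\ double line), which is a welcome addition rather than a divergence.
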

\begin{proof}
Let $J_0^f$ be the critical set for $f$ and $J_1^f$ its image under $f$.  Similarly, define $J_1^g = g(J_0^g)$. 
Assume $f$ and $g$ are map equivalent.
Then their critical sets are affine images of each other.
More specifically, if $g=k \circ f \circ h^{-1}$,
then $J_0^g = h(J_0^f)$, and $J_1^g = k(J_1^f)$.
The result now follows immediately because invertible affine maps preserve all the geometric properties used in the classification of $J_0$ and $J_1$: ellipses, hyperbolas, parabolas, lines, points, rays, empty sets, intersection/nonintersection of curves, and cusps on curves.
\end{proof}

Lemma \ref{lemma-JfJg} was utilized in \cite{NPM} to help prove the $J_0$-$J_1$ classification theorem, but the proofs in that paper are vastly simplified by the use of the Affine Map Equivalence Theorem (\ref{thm-ame}) below.
Similarly, \cite{DGRRV} uses Lemma \ref{lemma-JfJg} to help prove that the two generic cases, where $J_0$ is an ellipse, or hyperbola, each belong to a single equivalence class under {\it $C^\infty$ map equivalence}.
Our result strengthens theirs (each of these two cases belongs to a single equivalence class under the stronger condition of {\it affine map equivalence}), and 
their proofs are also vastly simplified and shortened by using the Affine Map Equivalence Theorem. 

\subsection{Organization}\label{ss-organization}
The rest of the paper is organized as follows.
In section \ref{s-AME}, we state Theorem \ref{thm-ame}, the affine map equivalence theorem for quadratic maps of the plane, and outline its proof.
There turn out to be exactly eighteen affine map equivlaence classes, and fifteen critical set equivalence classes.
This leads to a partial converse of Lemma \ref{lemma-JfJg}:
maps which are critical set equivalent are affine map equivalent, except for three pairs of maps which are critical set equivalent, but not affinely map equivalent.
In section \ref{s-consequences}, we state several corollaries and consequences of the affine map equivalence theorem, and discuss related results. In section \ref{s-discussion} we discuss alternate versions of map equivalence, and point out some interesting relationships to other areas.
The details of the proofs of the lemmas used to prove Theorem \ref{thm-ame} are presented in section \ref{s-lemma-pf}.

\section{The Affine map equivalence theorem}
\label{s-AME}

\begin{theorem}
\label{thm-ame}
Let $f$ be a quadratic map of the plane as in eq.~(\ref{eq-Q}).
Then $f$ is affinely map equivalent to exactly one of the 18 maps in table \ref{table-ame}. The description of the critical set $J_0^f$ and image $J_1^f$ is the same for any map in the same affine map equivalence class, as are the cardinalities of preimages.
\end{theorem}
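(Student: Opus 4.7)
The plan is to exploit Lemma \ref{lemma-JfJg} and Theorem \ref{thm-J0J1} to organize the proof by cases indexed by the type of critical set $J_0$. For each of the nine types of conic in Theorem \ref{thm-J0J1}, I would first perform an affine change of coordinates on the domain to put $J_0$ into a standard position (e.g.\ the unit circle when $J_0$ is an ellipse, $xy=0$ when $J_0$ is a pair of intersecting lines, $\{x=0\}$ when $J_0$ is a single line, and so on). Because the Jacobian determinant from eq.\ (\ref{eq-detDF}) is quadratic in $(x,y)$ and must vanish precisely on $J_0$, demanding the standard form for $J_0$ imposes explicit polynomial relations on the twelve coefficients $a_{ij},b_{ij}$, already collapsing large portions of the parameter space.

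Within each case, the remaining affine freedom (the affine stabilizer of the standardized $J_0$ in the domain, together with an arbitrary affine change of coordinates in the range) is then used to normalize the coefficients of $f$ itself. For instance, with $J_0$ the unit circle, the constraints on the quadratic part are highly rigid, and a rotation/reflection of the domain combined with an affine map in the range should eliminate the remaining scaling and translation parameters, producing a single representative. I expect the generic cases ($J_0$ an ellipse, hyperbola, or parabola) to each collapse to one normal form, while the degenerate cases ($J_0$ a line, coincident lines, or all of $\mR^2$) will split into several subcases depending on whether particular coefficient expressions vanish or differ in sign. Careful enumeration of these subcases should yield exactly eighteen representatives matching table \ref{table-ame}.

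The final task is to show that no two representatives on the list are affinely map equivalent. For twelve of the eighteen classes this is immediate from Lemma \ref{lemma-JfJg} combined with Theorem \ref{thm-J0J1}, since distinct $J_0$-$J_1$ classes are already invariants of affine map equivalence. For the three pairs of representatives sharing the same $J_0$-$J_1$ data, I would identify a further affine invariant distinguishing the two members of each pair, for example the cardinality of generic preimages (which the statement itself highlights), an incidence relation between $J_0$ and $J_1$, or a specific orientation/parity property of the image that is not recorded in the $J_0$-$J_1$ classification. Exhibiting different values of such an invariant on the two normal forms then certifies the pair as inequivalent.

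The main obstacle will be the bookkeeping: one must verify that the case analysis is exhaustive, that each normalization really uses only affine transformations in both domain and range, and that the final list has no redundancies from two branches of the decision tree arriving at the same map up to affine equivalence. The subtlest point, I anticipate, will be pinning down the three affine invariants that separate the three exceptional pairs with identical $J_0$-$J_1$; once the invariants are named, their affine invariance and their differing values on the pair are easy to check, but finding the right invariants requires a direct comparison of the normal forms themselves.
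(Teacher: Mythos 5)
Your organization is genuinely different from the paper's. You propose to branch on the type of $J_0$ and then exhaust the affine stabilizer of a standardized $J_0$; the paper instead first normalizes the homogeneous quadratic part alone (Lemma \ref{lemma-homogeneous}), using the observation that $\pi(k\circ Q\circ h^{-1})$ depends only on $\pi Q$ when $h,k$ are linear, and only afterwards treats the linear and constant terms in six separate lemmas, one per homogeneous normal form. The paper's decomposition has a concrete advantage exactly where your scheme loses traction: nine of the eighteen classes have $J_0$ equal to a single line or to all of $\mR^2$, so standardizing $J_0$ there carries little or no information (for $J_0=\mR^2$ it carries none, yet the five distinct classes $D^E_3, D^H_2, D^P_3, D^P_4, D^P_5$ must still be separated), whereas the homogeneous part continues to distinguish $x^2-y^2$, $x^2+y^2$, and $x^2$ even when the second component degenerates. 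Inside your degenerate cases you would end up reconstructing essentially the paper's homogeneous classification anyway.

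Two further gaps. First, your sketch passes over the hardest step: in the elliptic case, after all the easy normalizations one is still left with $(x^2-y^2+x,\,xy+b_{10}x)$, and eliminating $b_{10}$ is not achieved by a rotation of the domain plus an affine map of the range; the paper must solve twelve polynomial equations in the twelve coefficients of $h$ and $k$, reducing to a single cubic whose positive root is supplied by the intermediate value theorem (Lemma \ref{lemma-ell-longcase}, repeated in the hyperbolic case). Any correct version of your argument will meet the same nonlinear system, and ``the stabilizer should eliminate the remaining parameters'' is not yet an argument. Second, the invariant you name first for the three exceptional pairs --- cardinality of generic preimages --- fails for two of them: $D^E_1$ and $D^H_1$ both have preimage cardinalities $0,1,2$, and $D^E_3$, $D^P_3$ both have $0,\infty$. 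The paper instead uses convexity of the range for the first pair and the topology (connectedness, compactness) of the preimage fibers for the other two (Lemma \ref{lemma-inequivalent}). You correctly flag this as the subtle point, but the specific invariant you lead with does not do the job, so this part of the proposal needs to be replaced rather than merely elaborated.
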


\begin{table}[ht]
\begin{tabular}{|c|c|c|c|c|}
\hline \hline

Label&Normal form&$J_0$&$J_1$&Preimage\\
&representative&&&cardinalities\\
\hline \hline
$E_1$&$(x^2-y^2+x, xy)$ &ellipse&3-cusped curve& $2,3,4$\\
\hline
$E_2$&$(x^2-y^2, xy)$&point&point&1,2\\
\hline \hline
$H_1$&$(x^2+y^2+x, xy)$&hyperbola&two disj. curves,&$0,1,2,3,4$\\
&&& one w/ cusp&\\
\hline
$H_2$&$(x^2+y^2+x, xy+ \frac{1}{2}x)$&$2$ intersecting&parab. $\bigcup$ ray &$0,1,2,3,4$\\
&&lines&&\\
\hline
$H_3$&$(x^2+y^2, xy)$&$2$ intersecting &ray $\bigcup$ ray&$0,1,2,4$\\
&&lines&&\\
\hline \hline
$P_1$&$(x^2+y, xy)$&parabola&curve with cusp&$1,2,3$\\
\hline
$P_2$&$(x^2, xy+y)$&$2$ parallel&line $\bigcup$ point&$0,1,2,\infty$\\
&&lines&&\\
\hline
$P_3$&$(x^2, xy)$&$2$ coincident lines&point&$0,2,\infty$\\
\hline \hline
$D^E_1$&$(x^2-y^2,y)$&line&parabola *&$0,1,2 \; \dagger$\\
\hline
$D^E_2$&$(x^2-y^2, x+y)$&line&point&$0,1,\infty$\\
\hline
$D^E_3$&$(x^2-y^2, 0)$&$\mR^2$&line **&$0,\infty$\\
\hline \hline
$D^H_1$&$(x^2+y^2, y)$&line&parabola *&$0,1,2 \; \dagger$\\
\hline
$D^H_2$&$(x^2+y^2, 0)$&$\mR^2$&ray ***&$0,1,\infty$\\
\hline \hline
$D^P_1$&$(x^2+y, x)$&$\phi$&$\phi$&$1$\\
\hline
$D^P_2$&$(x^2,y)$&line&line&$0,1,2 \; \dagger$\\
\hline
$D^P_3$&$(x^2+y, 0)$&$\mR^2$&line **&$0,\infty \; \dagger \dagger$\\
\hline
$D^P_4$&$(x^2, x)$&$\mR^2$&parabola&$0,\infty \; \dagger \dagger$\\
\hline
$D^P_5$&$(x^2, 0)$&$\mR^2$&ray ***&$0,\infty$\\
\hline \hline
\end{tabular}
\caption{Representatives of the 18 cases in Theorem \ref{thm-ame} with their critical sets, $J_0$ and $J_1$, 
and
cardinalities of preimage sets.
Double lines separate groups with common quadratic terms.
A more detailed description of the $J_1$ sets is included in the statement of Theorem \ref{thm-J0J1}.
Asterisk groups mark classes that are critical set 
equivalent, but not affinely map equivalent.
Dagger groups mark classes that are homeomorphically (and polynomially) map equivalent, but not affinely map equivalent. See Sec. \ref{ss-other-equiv}.
Notation for the label is chosen to match the generic critical set $J_0$ in each grouping: $E$: ellipse, $H$: hyperbola, $P$: parabola. $D$ stands for degenerate, with the homogeneous part of the second component zero, and with the superscript chosen to match the algebraic form of the homogeneous part of the first component.
Subscripts enumerate the classes in each group.}

\label{table-ame}
\end{table}

%%%%%%%%%%%%%%%%%%%%%%%%%%%%%%%%%%%%%%%%%%%%%%%%%%%%%%%%%%%%%

\subsection{Phase space examples}
\label{s-phase}

%\end{document}

In Figure \ref{fig-18phase} we provide illustrations of images of a disk for an example in each of our classification cases.  See table \ref{table-ame} for further information about the properties of these cases.

\begin{figure}[htbp]

\begin{tabular}{|c|c|c|}
\hline
$E_1$&$E_2$&$H_1$\\

\includegraphics[width=.3\textwidth]{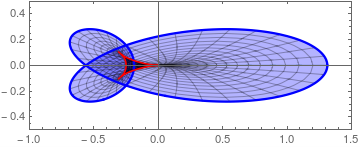}
&\includegraphics[width=.15\textwidth]{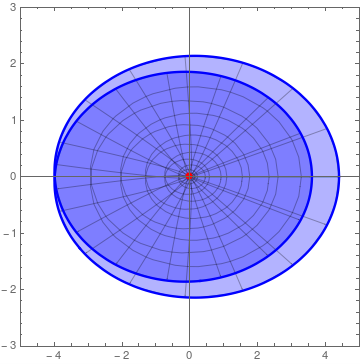}
&\includegraphics[width=.25\textwidth]{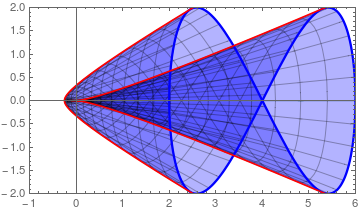}\\
\hline

$H_2$&$H_3$&$P_1$\\

\includegraphics[width=.25\textwidth]{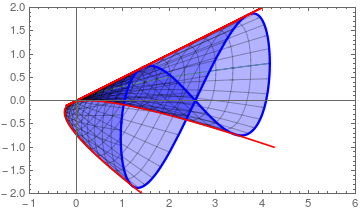}
&\includegraphics[width=.25\textwidth]{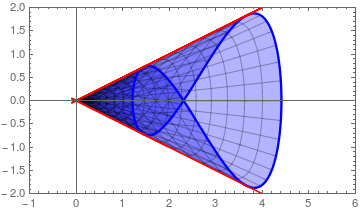}
&\includegraphics[width=.25\textwidth]{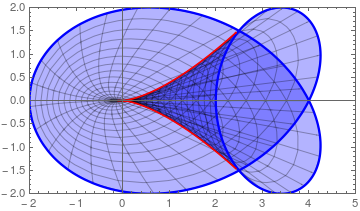}\\
\hline

$P_2$&$P_3$&$D^E_1$\\

\includegraphics[width=.13\textwidth]{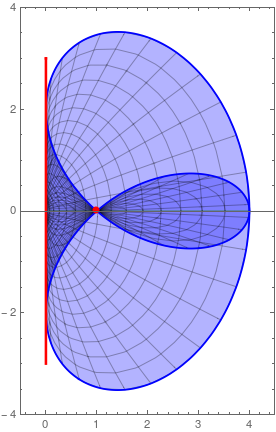}
&\includegraphics[width=.25\textwidth]{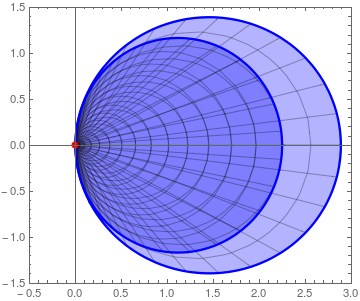}
&\includegraphics[width=.25\textwidth]{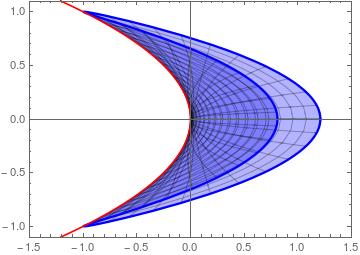}\\
\hline

$D^E_2$&$D^E_3$&$D^H_1$\\

\includegraphics[width=.2\textwidth]{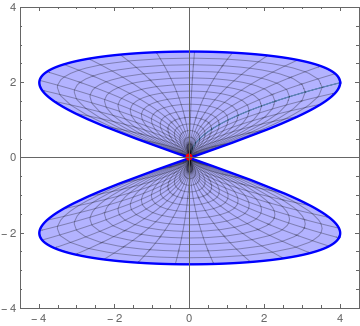}
&\includegraphics[width=.25\textwidth]{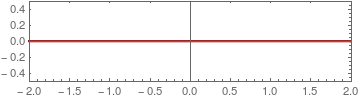}
&\includegraphics[width=.17\textwidth]{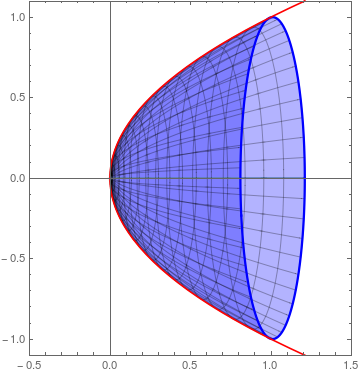}\\
\hline

$D^P_1$&$D^P_2$&$D^P_4$\\

\includegraphics[width=.25\textwidth]{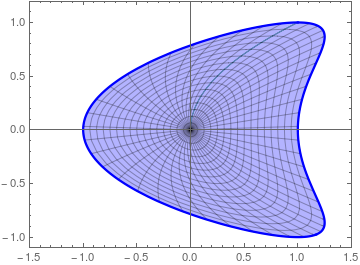}
&\includegraphics[width=.18\textwidth]{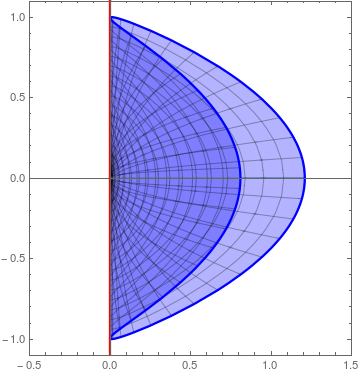}
&\includegraphics[width=.16\textwidth]{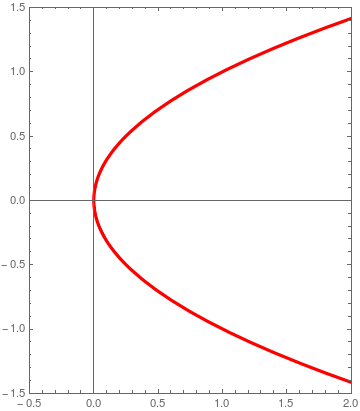}\\
\hline

$D^H_2$&$D^P_3$&$D^P_5$\\

\includegraphics[width=.25\textwidth]{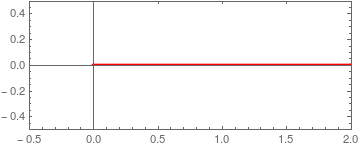}
&\includegraphics[width=.25\textwidth]{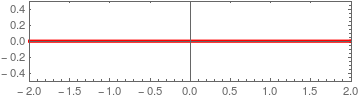}
&\includegraphics[width=.25\textwidth]{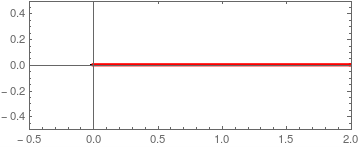}\\
\hline
\end{tabular}
\caption{Images of disks for the $18$ affine map equivalence representatives. Centers are offset from the origin for better viewing for $E_2, H_3, P_3, D^E_1, D^H_1$, and $D^P_2$. Critical values, $J_1$, are in red.  Refer to Table \ref{table-ame} for further description.}
\label{fig-18phase} 
\end{figure}

%%%%%%%%%%%%%%%%%%%%%%%%%%%%%%%%%%%%%%%%%%%%%%
\subsection{Proof of Theorem \ref{thm-ame}}
\label{ss-ame-proof}

\begin{proof}
The proof of the $18$ equivalence classes is effected by sequences of coordinate changes, both in the domain and range.
The algorithm for the coordinate changes terminates in exactly one of the 18 normal forms.
In other words, given any quadratic map $f$, we find nonsingular affine changes of coordinates $h$ (in the domain) and $k$ (in the range) such that $k \circ f \circ h^{-1}$ is one of the 18 representatives in table \ref{table-ame}.
That no pair of the 18 representatives is equivalent could be established algebraically by 
showing there is no invertible affine pair of maps $(h,k)$ such that $k Q_1=Q_2h$ for pair of maps $Q_1$ and $Q_2$, where $Q_1$ and $Q_2$ are different normal form representatives from Table \ref{table-ame}.
It is much easier (and more elegant), however, to find invariants of affine map equivalence, that
do not match between pairs of normal form representatives.
Lemma \ref{lemma-JfJg} is an example of properties ($J_0$ and $J_1$), that must match in order for the two maps to be affinely map equivalent.
This takes care of all pairs except in the cases where the corresponding $J_0$ and $J_1$
match: \{$D^E_1$, $D^H_1$\}, \{$D^E_3$, $D^P_3$\}, and \{$D^H_2$, $D^P_5$\}.
These three pairs turn out to be inequivalent because some other invariant of affine map equivalence (preimages or convexity) does not match.
More details are in Lemma \ref{lemma-inequivalent}.

Note that the homogeneous quadratic terms of $k \circ Q \circ h^{-1}$
are unaffected by the linear and constant terms of $Q$.
More formally, for any quadratic map $Q$ as in eq.~(\ref{eq-Q}), if we define the projection to the homogeneous quadratic terms, $\pi:Q \mapsto \pi Q $ by
\begin{align}
\pi Q(x,y)=(a_{20}x^2&+a_{11}xy+a_{02}y^2, b_{20}x^2+b_{11}xy+b_{02}y^2)
\label{eq-pQ}
\end{align}
then $\pi( k \circ Q \circ h^{-1})=\pi(k \circ \pi(Q) \circ h^{-1})$.
Additionally, if $h$ and $k$ are linear invertible maps, then 
$\pi(k \circ \pi(Q) \circ h^{-1})=k \circ \pi(Q) \circ h^{-1}$.
So, we deal with quadratic terms first, in Lemma \ref{lemma-homogeneous}, to identify six distinct homogeneous normal forms, then add back the linear and constant terms.
The subsequent lemmas \ref{lemma-elliptic} through \ref{lemma-deg-parabolic} each deal with one of these six homogeneous forms.
Note that the six homogeneous forms in Lemma \ref{lemma-homogeneous} each appear themselves in the list of 18 equivalence class representatives in Theorem \ref{thm-ame}.

\begin{lemma}
\label{lemma-homogeneous}
Let $Q_0$ be any homogeneous map of the form of eq.~(\ref{eq-pQ}).
Then $Q_0$ is affinely map equivalent to a quadratic map $G$, where $\pi G$ takes on one of the following six forms: $E_2=(x^2-y^2, xy)$, $H_3=(x^2+y^2, xy)$, $P_3=(x^2, xy)$, $D_3^E=(x^2-y^2, 0)$, $D_2^H=(x^2+y^2,0)$, or $D_5^P=(x^2,0)$.
\end{lemma}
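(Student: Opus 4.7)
The plan is to split by the rank of $(q_1, q_2)$ regarded as a pair of vectors in the three-dimensional space of binary quadratic forms. If they are linearly dependent, an invertible linear range transformation $k$ may be chosen to zero out one component, reducing $Q_0$ to $(q, 0)$ with $q$ a nonzero binary quadratic form. Real Sylvester's law of inertia applied to $q$ (up to an overall sign absorbed into $k$) brings $q$ to exactly one of $x^2 + y^2$, $x^2 - y^2$, or $x^2$, producing the three degenerate normal forms $D_2^H$, $D_3^E$, and $D_5^P$.

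Now suppose $q_1$ and $q_2$ are linearly independent. I first observe that $\det(DQ_0)$ is then a nonzero binary quadratic form in $(x,y)$: if it vanished identically, the gradients of $q_1$ and $q_2$ would be everywhere parallel as linear vector fields, forcing proportionality and contradicting linear independence. Under the equivalence $Q_0 \mapsto k \circ Q_0 \circ h^{-1}$ with invertible linear $h, k$, the form $\det(DQ_0)$ is multiplied by the nonzero scalar $\det(k)/\det(h)$ and pulled back by $h^{-1}$. Hence the \emph{type} of $\det(DQ_0)$ as a real quadratic form---definite, indefinite, or rank one---is an invariant of the equivalence class, splitting the rank-two case into the three subcases corresponding to $E_2$, $H_3$, and $P_3$. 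In the indefinite and rank-one subcases, $h$ may be chosen so as to bring the critical set $J_0 = \{\det(DQ_0) = 0\}$ into a canonical position, namely the coordinate axes so that $\det(DQ_0) = \gamma xy$, or the $y$-axis so that $\det(DQ_0) = \gamma x^2$. This canonicalization forces strong linear relations among the coefficients of $q_1, q_2$, after which the stabilizer of $J_0$ in the domain group together with the full range group suffices to clear the remaining coefficients.

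The main obstacle is the definite (elliptic) subcase yielding $E_2$, because $J_0 = \{0\}$ is preserved by every invertible linear $h$ and so provides no leverage from a canonicalization of $J_0$. The cleanest treatment identifies $\mR^2$ with $\mathbb{C}$ via $z = x + iy$, rewriting $Q_0$ as $F(z) = A z^2 + B z \bar z + C \bar z^2$ with arbitrary $A, B, C \in \mathbb{C}$; real linear maps in domain and range become complex maps of the form $\zeta \mapsto \alpha \zeta + \beta \bar \zeta$. The standard identity $\det(DF) = |F_z|^2 - |F_{\bar z}|^2$ translates definiteness of $\det(DF)$ into the inequality $|A\bar B - B\bar C|^2 < (|A|^2 - |C|^2)^2$, and an explicit sequence of complex scalings $z \mapsto \gamma z$ together with a range combination then normalizes $F$ to $z \mapsto z^2$, which after a final real rescaling matches $E_2$. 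The hyperbolic and degenerate-parabolic rank-two subcases can be handled either by the same complex-coordinate method or, more elementarily, by the real coordinatizations of $J_0$ described above.
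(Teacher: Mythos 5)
Your route is genuinely different from the paper's. The paper runs one uniform algorithm: conjugate by a rotation so that the $xy$ term of $\det(DQ_0)$ vanishes, arrange $a_{20}\ne 0$, use range shears to reduce the map to $(x^2+a_{02}y^2,\,b_{11}xy)$, and read off the six targets from the sign of $a_{02}$ and the vanishing of $b_{11}$. You instead organize the proof around invariants: linear dependence of the component forms $q_1,q_2$ (which is exactly the condition $\det(DQ_0)\equiv 0$, since that determinant is $2X_{20:11}x^2+4X_{20:02}xy+2X_{11:02}y^2$ and its vanishing is the vanishing of all $2\times 2$ minors of the coefficient matrix) separates the three degenerate targets, and the inertia type of $\det(DQ_0)$ splits the nondegenerate case into the $E_2$, $H_3$, $P_3$ subcases. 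Your degenerate branch (Sylvester) is complete, and your indefinite and rank-one branches do work as sketched: putting $J_0$ on the coordinate axes forces $X_{20:11}=X_{11:02}=0$ with $X_{20:02}\ne 0$, hence $a_{11}=b_{11}=0$ with the remaining $2\times 2$ coefficient matrix invertible, so the range group alone reduces to $(x^2,y^2)$ and thence to $H_3$; the rank-one case similarly forces $a_{02}=b_{02}=0$ and reduces to $P_3$. This invariant-first organization is more conceptual than the paper's and makes the exhaustiveness of the six targets transparent; its price is that the definite case gets no leverage from $J_0$ and must be treated separately.

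That separate treatment is where there is a genuine gap. In the definite case you allow only domain scalings $z\mapsto\gamma z$ plus one range combination $w\mapsto\alpha w+\beta\bar w$, and that toolkit provably cannot normalize $F(z)=z^2+\epsilon z\bar z$ with $0<\epsilon<1$, which is definite by your own criterion since $|A\bar B-B\bar C|=\epsilon<1=\bigl||A|^2-|C|^2\bigr|$. Indeed, after $z\mapsto\gamma z$ the coefficient triple $(A,B,C)$ becomes $(\gamma^2,\epsilon|\gamma|^2,0)$, and the range map sends it to $(\alpha\gamma^2,\;(\alpha+\beta)\epsilon|\gamma|^2,\;\beta\bar\gamma^2)$; killing the $\bar z^2$ coefficient forces $\beta=0$ and then killing the $z\bar z$ coefficient forces $\alpha=0$, so $k$ is singular. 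You need domain maps with a nonzero antiholomorphic part, $z\mapsto\gamma z+\delta\bar z$, and then an actual computation. The cleanest repair is to do in the definite case what you already do in the other two, but one level up: canonicalize the \emph{form} $\det(DQ_0)$ rather than its zero set. A domain rotation diagonalizes $\det(DQ_0)$ even when it is definite (this is precisely the paper's first step, $X_{20:02}=0$ after rotation); then $(b_{20},b_{02})$ is proportional to $(a_{20},a_{02})$, a single range shear clears the $x^2$ and $y^2$ terms from the second component, and rescaling yields $(x^2+a_{02}y^2,\,b_{11}xy)$ with $a_{02}<0$, $b_{11}\ne0$, hence $E_2$. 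With that substitution the three nondegenerate subcases are handled uniformly and your proof closes.
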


\begin{lemma}
\label{lemma-elliptic}
Let $Q$ be a quadratic map with homogeneous part affinely map equivalent to 
$(x^2-y^2, xy)$.  Then $Q$ is affinely map equivalent to one of the following two quadratic maps: $E_1=(x^2-y^2+x, xy)$, or $E_2=(x^2-y^2,xy)$.
\end{lemma}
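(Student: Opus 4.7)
By Lemma \ref{lemma-homogeneous}, I may assume $\pi Q = (x^2-y^2, xy)$ exactly, so that
$Q(x,y) = (x^2-y^2+a_{10}x+a_{01}y+a_{00}, \; xy+b_{10}x+b_{01}y+b_{00})$.
The plan is to peel off the lower-order terms in stages, using only affine equivalences that preserve (or can easily restore) this homogeneous form. First, the domain translation $h^{-1}(x,y)=(x-b_{01},\, y-b_{10})$ is chosen precisely to kill the linear terms of the second component; then a range translation absorbs the remaining constants. A direct computation reduces $Q$ to the two-parameter normal form
\[
\hat Q(x,y) = (x^2-y^2+Ax+By, \; xy)
\]
for some real pair $(A,B)$ depending on the original coefficients. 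If $(A,B)=(0,0)$, then $\hat Q = E_2$ and we are done.

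Otherwise I claim $\hat Q$ is affinely map equivalent to $E_1$. For this I would exploit the rotational symmetry of $E_2$: for each angle $\theta$, the domain rotation $h_\theta^{-1}(x,y)=(x\cos\theta-y\sin\theta,\, x\sin\theta+y\cos\theta)$ paired with the unique linear range map
\[
k_\theta(u,v) = \bigl(u\cos 2\theta + 2v\sin 2\theta,\; -\tfrac{1}{2}u\sin 2\theta + v\cos 2\theta\bigr)
\]
preserves $(x^2-y^2, xy)$ as the homogeneous part. Applying $k_\theta \circ \hat Q \circ h_\theta^{-1}$ introduces new linear terms in \emph{both} components; I would then absorb the second-component linear terms by a second domain translation (which reshuffles the linear terms in the first component) and eliminate the resulting constants by a final range translation, returning the map to the form $(x^2-y^2+A'x+B'y,\; xy)$.

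The key computation, and the step where I expect the main subtlety to lie, is tracking the cumulative effect on $(A,B)$ through this four-stage operation. Collecting terms and using the triple-angle identities, the resulting formula is the remarkably clean rotation
\[
(A', B') = (A\cos 3\theta + B\sin 3\theta,\; -A\sin 3\theta + B\cos 3\theta),
\]
namely rotation of $(A,B)$ by angle $-3\theta$. Choosing $\theta$ equal to one-third of the polar angle of $(A,B)$ therefore aligns $(A',B')$ with the positive $x$-axis at the point $(R,0)$ where $R=\sqrt{A^2+B^2}$. A uniform scaling $h^{-1}(x,y)=(sx,sy)$, $k(u,v)=(u/s^2,\, v/s^2)$ acts on the parameters by $(A,B)\mapsto (A/s, B/s)$, so taking $s=R$ reduces $\hat Q$ to $(x^2-y^2+x,\, xy) = E_1$. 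That $E_1$ and $E_2$ are genuinely distinct affine map equivalence classes is immediate from Lemma \ref{lemma-JfJg}, since their critical sets (an ellipse and a single point, respectively) do not match.
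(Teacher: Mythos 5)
Your proposal is correct, and it takes a genuinely different route from the paper's. The paper normalizes to $(x^2-y^2+a_{10}x,\ xy+b_{10}x)$ (killing the $y$-linear terms in both components), scales $a_{10}$ to $1$ when possible, and then must dispose of the residual $b_{10}x$ term in the second component via a separate, heavy sub-lemma: matching coefficients in $E_1 h = k Q_2$ yields twelve polynomial equations in the twelve unknown coefficients of $h$ and $k$, which are reduced by successive elimination to a single cubic in one variable whose positive root is produced non-constructively by the intermediate value theorem. You instead normalize the other way, pushing all linear terms into the first component to get $(x^2-y^2+Ax+By,\ xy)$, and then exploit the circle of linear symmetries of the homogeneous part $(x^2-y^2,xy)$ (essentially the $z\mapsto z^2$ structure, under which a domain rotation by $\theta$ is compensated by your $k_\theta$). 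I checked the key computation: with $\alpha=A\cos\theta+B\sin\theta$, $\beta=-A\sin\theta+B\cos\theta$, the conjugated map has first component $x^2-y^2+(\alpha x+\beta y)\cos 2\theta$ and second component $xy-\tfrac12(\alpha x+\beta y)\sin 2\theta$; re-normalizing the second component by the translation $(x,y)\mapsto(x+\tfrac12\beta\sin 2\theta,\ y+\tfrac12\alpha\sin 2\theta)$ gives $A'=\alpha\cos 2\theta+\beta\sin 2\theta=A\cos 3\theta+B\sin 3\theta$ and $B'=\beta\cos 2\theta-\alpha\sin 2\theta=-A\sin 3\theta+B\cos 3\theta$, exactly your rotation by $-3\theta$. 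The payoff is substantial: your argument is fully explicit and constructive (the normalizing $\theta$ and the scaling $s=R$ are written down in closed form), it avoids the intermediate value theorem entirely, and the appearance of $3\theta$ gives a conceptual explanation, via the symmetry group of $E_2$, of why a single one-parameter normal-form family collapses to the two classes $E_1$ and $E_2$; the paper's approach, by contrast, is more mechanical but generalizes with only sign changes to the hyperbolic case $(x^2+y^2,xy)$, where the compensating symmetry is not a compact rotation group and your argument would need to be reworked.
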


\begin{lemma}
\label{lemma-hyperbolic}
Let $Q$ be a quadratic map with homogeneous part affinely map equivalent to 
$(x^2+y^2, xy)$.  Then $Q$ is affinely map equivalent to one of the following three quadratic maps: $H_1=(x^2+y^2+x,xy)$, $H_2=(x^2+y^2+x, xy+\frac{1}{2}x)$ or $H_3=(x^2+y^2,xy)$.
\end{lemma}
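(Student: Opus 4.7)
The strategy is to reduce $Q$ by a sequence of affine coordinate changes, first normalizing the homogeneous part and then simplifying the lower-order terms. Since translations affect only lower-order terms, the homogeneous part of $k\circ Q\circ h^{-1}$ depends only on the linear parts of $h$ and $k$; hence by Lemma \ref{lemma-homogeneous} we may assume $\pi Q=(x^2+y^2,\,xy)$. The key observation is that the linear change $u=x+y$, $v=x-y$ in the domain, together with $(W_1,W_2)\mapsto(W_1+2W_2,\,W_1-2W_2)$ in the range, carries $(x^2+y^2,\,xy)$ to $(u^2,\,v^2)$, since $(x\pm y)^2=(x^2+y^2)\pm 2xy$. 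So without loss of generality
\begin{equation*}
Q(u,v)=(u^2+\alpha_1 u+\alpha_2 v+\alpha_0,\ v^2+\beta_1 u+\beta_2 v+\beta_0).
\end{equation*}

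Next, a range translation eliminates $\alpha_0$ and $\beta_0$, and the domain translation $(u,v)\mapsto (u-\alpha_1/2,\,v-\beta_2/2)$ completes the square in each component, killing $\alpha_1$ and $\beta_2$; the new constants produced are absorbed by another range translation. The result is the two-parameter normal form
\begin{equation*}
Q(u,v)=(u^2+av,\ v^2+bu),\qquad a,b\in\mR.
\end{equation*}

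The remaining freedom is the linear isotropy of $(u^2,v^2)$. Requiring that each component of $h$, when squared, contain no $uv$ cross term forces $h$ to be either diagonal, $h(u,v)=(pu,sv)$ paired with $k(U,V)=(p^2U,s^2V)$, or a swap-scaling, $h(u,v)=(qv,ru)$ paired with $k(U,V)=(q^2V,r^2U)$. The diagonal action sends $(a,b)\mapsto(p^2a/s,\,s^2b/p)$; setting $(\mu,\nu)=(p^2/s,s^2/p)$ and solving by real cube roots shows $a$ and $b$ can be rescaled independently by arbitrary nonzero factors, while the swap exchanges them. Therefore there are exactly three orbits, with canonical representatives $(u^2,v^2)$, $(u^2+v,\,v^2)$, and $(u^2+v,\,v^2+u)$. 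Reversing the initial linear change identifies these with $H_3$, $H_2$, and $H_1$ of Table \ref{table-ame}, respectively (a small translation and rescaling reconciles them exactly); pairwise inequivalence follows from Lemma \ref{lemma-JfJg}, since the $J_0$ and $J_1$ data in Table \ref{table-ame} differ pairwise across $H_1,H_2,H_3$.

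The principal obstacle is recognizing the linear change $u=x+y,\ v=x-y$ that diagonalizes the homogeneous part to $(u^2,v^2)$. Without it one must work with the isotropy of $(x^2+y^2,xy)$, which is a continuous two-parameter family whose action on the linear coefficients is cumbersome; in the $(u,v)$ form the isotropy collapses to scalings and a swap, making the orbit analysis transparent and the three normal forms immediate.
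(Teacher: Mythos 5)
Your proof is correct, but it takes a genuinely different and substantially shorter route than the paper's. The paper keeps the homogeneous part in the form $(x^2+y^2,xy)$, translates and rescales down to $(x^2+y^2+x,\,xy+b_{10}x)$, and then must dispose of the residual parameter $b_{10}$ by brute force: matching coefficients in $H_1 h = k Q_2$ gives twelve equations in twelve unknowns, which are reduced to the single cubic \eqref{hyper_p_0} and solved via the intermediate value theorem (Lemma \ref{lemma-hyp-longcase}); the exceptional values $b_{10}=\pm\frac{1}{2}$, where the leading coefficient $(8b_{10}^2-2)^2$ of that cubic vanishes, are exactly the $H_2$ class. Your diagonalization of the homogeneous part to $(u^2,v^2)$ sidesteps all of this: the residual linear symmetries are forced to be diagonal or antidiagonal (strictly, the no-cross-term condition must be imposed on the \emph{span} of the two squared components, since $k$ may mix them, but that still forces $pq=rs=0$ and the same conclusion --- and for the reduction you only need to exhibit enough symmetries, so nothing is at stake), the surviving parameters $(a,b)$ transform by independent nonzero rescalings plus a swap, and the three orbits $a=b=0$, exactly one of $a,b$ zero, and $ab\neq 0$ fall out by inspection. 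I verified the endgame you left implicit: undoing your initial linear change sends $(u^2,v^2)$, $(u^2+v,v^2)$, $(u^2+v,v^2+u)$ to $(x^2+y^2,xy)$, $(x^2+y^2+\tfrac{x-y}{2},\,xy+\tfrac{x-y}{4})$, and $(x^2+y^2+x,\,xy-\tfrac{y}{2})$, and a translation plus rescaling carries these exactly onto $H_3$, $H_2$, $H_1$; pairwise inequivalence via Lemma \ref{lemma-JfJg} is as in the paper. What your approach buys is the elimination of the cubic/IVT computation and a conceptual explanation of why there are exactly three classes and why $b_{10}=\pm\frac{1}{2}$ is special. What the paper's approach buys is uniformity: the same normalize-then-solve-a-cubic template is reused verbatim for the elliptic case (Lemma \ref{lemma-ell-longcase}), where your trick is unavailable, since no nonzero real linear combination of $x^2-y^2$ and $xy$ is a perfect square and hence $(x^2-y^2,xy)$ cannot be diagonalized over $\mR$.
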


\begin{lemma}
\label{lemma-parabolic}
Let $Q$ be a quadratic map with homogeneous part affinely map equivalent to 
$(x^2, xy)$.  Then $Q$ is affinely map equivalent to one of the following three quadratic maps: $P_1=(x^2+y,xy)$, $P_2=(x^2, xy+y)$ or $P_3=(x^2,xy)$.
\end{lemma}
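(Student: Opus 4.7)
The plan is to reduce a general quadratic map $Q$ whose homogeneous part is affinely equivalent to $(x^2, xy)$ to one of $P_1, P_2, P_3$ by an explicit sequence of affine changes of coordinates. By Lemma \ref{lemma-homogeneous}, after an initial affine equivalence I may assume at the outset that $\pi Q = (x^2, xy)$, so that
\[
Q(x,y) = \bigl(x^2 + a_{10}x + a_{01}y + a_{00},\; xy + b_{10}x + b_{01}y + b_{00}\bigr).
\]
A direct computation of the linear stabilizer of $(x^2, xy)$---that is, invertible linear pairs $(h,k)$ with $k \circ (x^2, xy) = (x^2, xy) \circ h$---shows it consists of the three-parameter family $h(x,y) = (\alpha x, \gamma x + \delta y)$ and $k(u,v) = (\alpha^2 u, \alpha\gamma u + \alpha\delta v)$, with $\alpha\delta \neq 0$; this will be the main tool for the final reductions.

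As a first pass, a range translation eliminates $a_{00}, b_{00}$. A domain translation $(x,y) \mapsto (x-c, y-d)$ then sends $a_{10} \mapsto a_{10} - 2c$, $b_{10} \mapsto b_{10} - d$, $b_{01} \mapsto b_{01} - c$, while leaving $a_{01}$ fixed. Choosing $c = a_{10}/2$, $d = b_{10}$ (and absorbing the new constants by a second range translation) reduces $Q$ to the form $(x^2 + a_{01}y,\; xy + T y)$ where $T := b_{01} - a_{10}/2$. The proof now splits on $(a_{01}, T)$: if $a_{01} = T = 0$ we are at $P_3$; if $a_{01} = 0$ and $T \neq 0$, the stabilizer with $\gamma = 0$ and $\alpha = 1/T$ (and $\delta$ arbitrary) rescales $Ty \mapsto y$ and yields $P_2$.

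The substantive case, and the main obstacle, is $a_{01} \neq 0$. Neither the stabilizer nor a further translation alone can kill all of $\tilde a_{10}, \tilde b_{10}, \tilde b_{01}$ while simultaneously fixing the normalization $\tilde a_{01} = 1$, because translation preserves the combination $a_{10} - 2b_{01}$. A short calculation shows that the stabilizer action produces
\[
\tilde a_{10} = -\alpha a_{01}\gamma/\delta, \quad \tilde a_{01} = \alpha^2 a_{01}/\delta, \quad \tilde b_{01} = \alpha(\gamma a_{01}/\delta + T),
\]
so the choice $\gamma = -2T\delta/(3 a_{01})$ forces $\tilde a_{10} - 2\tilde b_{01} = 0$ while $\delta = \alpha^2 a_{01}$ normalizes $\tilde a_{01}$ to $1$. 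A final domain translation (possible precisely because the translation-invariant combination now vanishes) then simultaneously zeros all remaining linear coefficients, and a range translation absorbs the resulting constants, delivering $P_1 = (x^2 + y, xy)$. The three-parameter stabilizer plus the two-parameter domain translation give exactly five degrees of freedom to kill four linear coefficients and set one scale, which is why this works.
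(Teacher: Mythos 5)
Your proof is correct and follows the same overall strategy as the paper's: the identical preliminary domain translation reducing $Q$ to $(x^2+a_{01}y,\;xy+Ty)$ with $T=b_{01}-a_{10}/2$, the identical three-way case split on $(a_{01},T)$, and the same three terminal forms $P_1$, $P_2$, $P_3$. The only substantive difference is how the hard case $a_{01}\neq 0$ is handled. The paper first rescales to $Q_2=(x^2+y,\,xy+b_{01}y)$ and then simply exhibits explicit affine maps $h(x,y)=(x+b_{01}/3,\,-\frac{2b_{01}}{3}x+y+\frac{2b_{01}^2}{9})$ and a matching $k$ with $P_1h=kQ_2$, leaving the verification as a computation; you instead derive the transformation by computing the linear stabilizer of $(x^2,xy)$ and identifying $a_{10}-2b_{01}$ as the unique translation-invariant obstruction to killing the linear terms. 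Your choice $\gamma=-2T\delta/(3a_{01})$ with $\delta=\alpha^2a_{01}$ reproduces the paper's coefficients exactly (take $\alpha=1$ after the normalization $a_{01}=1$, so $\gamma=-2b_{01}/3$), so your argument is effectively an explanation of where the paper's otherwise unmotivated maps come from, and the degrees-of-freedom count you give is a useful sanity check the paper omits. The computations you state all check out, including the invariance of $a_{10}-2b_{01}$ under domain translations and the consistency condition $\tilde a_{10}=2\tilde b_{01}$ that makes the final translation possible.
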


\begin{lemma}
\label{lemma-deg-elliptic}
Let $Q$ be a quadratic map with homogeneous part equivalent to 
$(x^2-y^2, 0)$.  Then $Q$ is affinely map affinely map equivalent to one of the following three quadratic maps: $D^E_1=(x^2-y^2,y)$, $D^E_2=(x^2-y^2, x+y)$ or $D^E_3=(x^2-y^2,0)$.
\end{lemma}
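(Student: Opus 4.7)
The plan follows the decision-tree strategy of the previous lemmas: reduce parameters via affine coordinate changes in domain and range until a short list of normal forms remains, then use Lemma \ref{lemma-JfJg} to verify pairwise inequivalence. By Lemma \ref{lemma-homogeneous} I may assume $\pi Q = (x^2-y^2,0)$ exactly, so that $Q(x,y) = (x^2 - y^2 + a_{10}x + a_{01}y + a_{00},\ b_{10}x + b_{01}y + b_{00})$. A translation in the domain by $(-a_{10}/2,\ a_{01}/2)$ completes the square in the first component, absorbing its linear terms into $x^2-y^2$ (the coefficients $b_{10},b_{01}$ are not affected by a translation). A subsequent translation in the range clears both constant terms, reducing $Q$ to $(x^2-y^2,\ L(x,y))$ with $L=b_{10}x+b_{01}y$.

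Next I exploit the residual freedom. The hyperbolic rotations $h_t(x,y)=(\cosh t\cdot x+\sinh t\cdot y,\ \sinh t\cdot x+\cosh t\cdot y)$, together with the swap $(x,y)\leftrightarrow(y,x)$ and coordinate sign flips, generate (modulo scaling) the invertible linear maps that preserve $x^2-y^2$ up to sign and scale; they act on the coefficient vector $(b_{10},b_{01})$ by the same transformation. The invariant $\mathrm{sign}(b_{10}^2-b_{01}^2)$ then drives the case split. If $L\equiv 0$, the reduction terminates at $D^E_3$. If $b_{10}^2\ne b_{01}^2$, an appropriate boost $h_t$ (with $\tanh t$ equal to a suitable ratio of coefficients) makes one of the coefficients vanish; then a range rescaling, combined if necessary with the swap $(x,y)\leftrightarrow(y,x)$ and a compensating range sign flip on the first component, produces $L=y$, so $Q$ reduces to $D^E_1$. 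If $L\ne 0$ with $b_{10}^2=b_{01}^2$, boosts merely rescale $L$ (since $L$ is an eigenvector for the boost action on the null lines), and after rescaling plus a sign flip the residue is $L=x+y$, i.e., $D^E_2$.

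Pairwise inequivalence follows from Lemma \ref{lemma-JfJg}: $D^E_3$ has $J_0=\mR^2$ while the other two have $J_0$ a single line, and $D^E_1$ has $J_1$ a parabola whereas $D^E_2$ has $J_1$ a point. The main subtlety is correctly identifying the orbit structure of the conformal Lorentz action on nonzero covectors, and in particular checking that the positive-norm and negative-norm orbits merge into a single affine equivalence class once the $x\leftrightarrow y$ swap (with an accompanying range sign flip) is included, so that three orbits (not four) remain.
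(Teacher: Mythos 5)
Your proposal is correct and follows essentially the same route as the paper: translations remove the linear terms of the first component and the constants, and the residual linear form $b_{10}x+b_{01}y$ is then normalized by linear maps preserving $x^2-y^2$ up to sign and scale, with the null/non-null dichotomy $b_{10}^2=b_{01}^2$ versus $b_{10}^2\neq b_{01}^2$ separating $D^E_2$ from $D^E_1$ and $L\equiv 0$ giving $D^E_3$. The paper's explicit coordinate change $h(x,y)=(b_{01}x+y,\,x+b_{01}y)$ (applied after normalizing $b_{10}=1$) is exactly a scaled hyperbolic rotation, so your Lorentz-orbit description is the same computation in group-theoretic dress.
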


\begin{lemma}
\label{lemma-deg-hyperbolic}
Let $Q$ be a quadratic map with homogeneous part equivalent to 
$(x^2+y^2, 0)$.  Then $Q$ is affinely map affinely map equivalent to one of the following two quadratic maps: $D^H_1=(x^2+y^2, y)$ or $D^H_2=(x^2+y^2,0)$.
\end{lemma}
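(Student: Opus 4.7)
The plan is to reduce $Q$ by a short sequence of affine changes of coordinates in the domain and range, mirroring the structure of the previous lemmas in this section. By Lemma \ref{lemma-homogeneous}, after an initial affine map equivalence we may assume $\pi Q=(x^2+y^2,0)$ exactly, so that
\begin{equation*}
Q(x,y)=(x^2+y^2+a_{10}x+a_{01}y+a_{00},\; b_{10}x+b_{01}y+b_{00}).
\end{equation*}

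First I would complete the square in the first component by translating in the domain, $(x,y)\mapsto(x-a_{10}/2,\,y-a_{01}/2)$. This kills the linear terms in the first component and leaves the linear form $b_{10}x+b_{01}y$ in the second component unchanged, altering only the two constant terms. A translation in the range then eliminates both new constants, yielding
\begin{equation*}
\widetilde Q(x,y)=(x^2+y^2,\; b_{10}x+b_{01}y).
\end{equation*}
At this point I would split on whether $(b_{10},b_{01})=(0,0)$. If so, $\widetilde Q=D^H_2$ and we are done. Otherwise, a rotation of the domain (which fixes $x^2+y^2$) sends $(b_{10},b_{01})$ to $(0,\mu)$ with $\mu=\sqrt{b_{10}^2+b_{01}^2}>0$, producing $(x^2+y^2,\mu y)$; a final range scaling $v\mapsto v/\mu$ delivers $D^H_1=(x^2+y^2,y)$.

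The step I expect to be most delicate is the choice of domain transformation after the constants have been cleared. Only rotations (and reflections) are available at this stage because the stabilizer of the definite form $x^2+y^2$ inside $GL(2,\mR)$ is the compact group $O(2)$, and there is no freedom in the range to undo a non-orthogonal domain change without reintroducing cross or linear terms in the first component. This also explains the structural contrast with Lemma \ref{lemma-deg-elliptic}: the indefinite form $x^2-y^2$ admits real null directions ($y=\pm x$), so under its admissible group the nonzero linear forms in the second component split into two orbits (null and non-null), forcing an additional normal form $D^E_2$ there. The positive-definite form $x^2+y^2$ has no nonzero null vectors, leaving only the zero and nonzero orbits and hence exactly the two cases claimed. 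Finally, the inequivalence of $D^H_1$ and $D^H_2$ is immediate from Lemma \ref{lemma-JfJg}, since their critical sets ($J_0$ a line versus $J_0$ all of $\mR^2$) do not match.
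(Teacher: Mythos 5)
Your proof is correct, and it takes a genuinely different (and shorter) route than the paper's. Both arguments begin identically: a domain translation completing the square, followed by a range translation, reduces $Q$ to $(x^2+y^2,\,b_{10}x+b_{01}y)$, and the case $(b_{10},b_{01})=(0,0)$ gives $D^H_2$ immediately. From there the paper proceeds through a chain of intermediate forms: it first rescales to $(x^2+y^2,\,x+b_{01}y)$ when $b_{10}\neq 0$, then applies an explicit pair $(h,k)$ with $h(x,y)=\bigl(x-\tfrac{b_{01}+1}{b_{01}-1}y,\,-\tfrac{b_{01}+1}{b_{01}-1}x-y\bigr)$ to reach $(x^2+y^2,\,x+y)$, then $(x^2+y^2,\,x)$, and finally swaps coordinates to land on $D^H_1$. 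You replace that entire chain with a single rotation of the domain aligning the linear form with the $y$-axis, followed by one range scaling. This works precisely because rotations stabilize $x^2+y^2$, which is the structural point you identify; in fact the paper's map $h$ above is itself a scalar multiple of an orthogonal matrix (one checks $(x-cy)^2+(cx+y)^2=(1+c^2)(x^2+y^2)$), so the paper is implicitly using the same conformal symmetry, just written out in coordinates and organized to parallel the degenerate elliptic case, where no rotation trick is available because $O(2)$ does not stabilize $x^2-y^2$. Your orbit-theoretic remark correctly explains why the $D^E$ family needs a third normal form ($D^E_2$, coming from the null directions of the indefinite form) while the $D^H$ family does not. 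One minor imprecision: the group available in the domain after the constants are cleared is not just $O(2)$ but the similarities $\mR^{+}\cdot O(2)$, since a scalar factor in the first component can be absorbed by a range scaling; this does not affect your argument, which only uses the rotations. The concluding inequivalence of $D^H_1$ and $D^H_2$ via Lemma \ref{lemma-JfJg} matches the paper (and is not strictly required by the statement of the lemma as phrased).
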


\begin{lemma}
\label{lemma-deg-parabolic}
Let $Q$ be a quadratic map with homogeneous part affinely map equivalent to 
$(x^2, 0)$.  Then $Q$ is affinely map equivalent to one of the following five quadratic maps: $D^P_1=(x^2+y,x)$, $D^P_2=(x^2,y)$,$D^P_3=(x^2+y,0)$, $D^P_4=(x^2, x)$ or $D^P_5=(x^2,0)$.
\end{lemma}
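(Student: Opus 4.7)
The plan is to normalize $Q$ by a short decision tree of affine coordinate changes, in the same spirit as the preceding lemmas. By Lemma \ref{lemma-homogeneous}, together with the observation that linear changes of coordinates commute with the projection $\pi$ onto the homogeneous quadratic part, I may first assume $\pi Q=(x^2,0)$, so that $Q(x,y)=(x^2+a_{10}x+a_{01}y+a_{00},\,b_{10}x+b_{01}y+b_{00})$ for some real parameters. A domain translation $x\mapsto x-a_{10}/2$ removes the $a_{10}x$ term without disturbing $\pi Q$ (the other quadratic coefficients are already zero), and a subsequent range translation removes the two constants $a_{00}$ and $b_{00}$. After these reductions I may assume $Q(x,y)=(x^2+a_{01}y,\,b_{10}x+b_{01}y)$.

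The remaining affine freedom preserving $\pi Q=(x^2,0)$ consists, up to translations that can always be undone, of domain linear maps of the form $(x,y)\mapsto(Px,\,Rx+Sy)$ and range linear maps of the form $(u,v)\mapsto(\alpha u+\beta v,\,\delta v)$, subject to $\alpha P^2=1$ and $PS\delta\neq 0$. I split on the vanishing of the parameters $b_{01}$, $a_{01}$, and $b_{10}$. If $b_{01}\neq 0$, the domain shear $(x,y)\mapsto(x,\,y-(b_{10}/b_{01})x)$ kills the $b_{10}x$ term of the second component at the cost of reintroducing a linear $x$ term in the first component, which a further $x$-translation removes; the subsequent range shear $(u,v)\mapsto(u-(a_{01}/b_{01})v,\,v/b_{01})$ then yields $D^P_2=(x^2,y)$. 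If $b_{01}=0$ and $a_{01}\neq 0$, the domain scaling $(x,y)\mapsto(x,\,y/a_{01})$ puts $Q$ into the form $(x^2+y,\,b_{10}x)$, which a range scaling of the second component sends to $D^P_1=(x^2+y,x)$ when $b_{10}\neq 0$, and which is already $D^P_3=(x^2+y,0)$ when $b_{10}=0$. Finally, if $b_{01}=a_{01}=0$, then $Q=(x^2,\,b_{10}x)$, which a range scaling sends to $D^P_4=(x^2,x)$ if $b_{10}\neq 0$, and which equals $D^P_5=(x^2,0)$ otherwise.

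The main difficulty is bookkeeping rather than anything deep: each coordinate change aimed at one coefficient perturbs others, so one must keep re-applying the $x$-translation and the range translation to clean up reintroduced linear and constant terms. Termination is automatic because each branching step eliminates a parameter and never reintroduces quadratic terms beyond $x^2$. The five listed normal forms are exhaustive because once $b_{01}\neq 0$ the parameters $a_{01}$ and $b_{10}$ are both absorbed by the final shear and scaling, so the three binary conditions on $b_{01}$, $a_{01}$, $b_{10}$ collapse from eight potential cases to the five listed outcomes.
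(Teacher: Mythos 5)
Your proposal is correct and follows essentially the same strategy as the paper: eliminate $a_{10}$ and the constants by translation, then run a decision tree on the vanishing of $a_{01}$, $b_{10}$, $b_{01}$, terminating in the same five normal forms. The only differences are cosmetic --- you branch on $b_{01}$ first and absorb $b_{10}$ with a domain shear when $b_{01}\neq 0$, whereas the paper branches on $b_{10}$ first and reaches $D^P_2$ through the intermediate form $(x^2+a_{01}y,\,x+y)$ --- and your individual coordinate changes all check out.
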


\begin{lemma}
\label{lemma-inequivalent}
The following sets of quadratic maps are inequivalent under affine map equivalence.  A sketch of the proof in each case is listed with the result. 
Further details are provided 
in Lemma \ref{lemma-top-invariants}.

\begin{enumerate}
\item $D^E_1=(x^2 - y^2,y)$ and $D^H_1=(x^2+y^2,y)$.  Both have a line for $J_0$ and a parabola for $J_1$, but the range of $D^H_1$ is convex, while the range of $D^E_1$ is non-convex. 
%(See the respective phase images in Fig.~\ref{fig-18phase}, and the discussion of convexity in Lemma \ref{lemma-top-invariants}.)

\begin{comment}
\begin{figure}[hptb]
  \includegraphics[width=2.5in]{AMEfigs/line_parabola_1.png}
  \includegraphics[width=2.5in]{AMEfigs/line_parabola_2.png}
\caption{\footnotesize The ranges of the maps 
$D^H_1=(x^2 + y^2, y)$, $D^E_1=(x^2 - y^2, y)$, respectively, shaded. Since one convex and the other non-convex, they are not affinely map equivalent.
{\color{red}Is this figure now duplicated by Fig \ref{fig-18phase}}}
\end{figure}

\end{comment}

\item $D^E_3=(x^2-y^2, 0)$ and $D^P_3=(x^2+y, 0)$.   Both have $\mR^2$ for $J_0$ and a line for $J_1$, but preimages of points in $J_1$ for $D^E_3$ are hyperbolas, while preimages of points in $J_1$ for $D^P_3$ are parabolas.   A parabola cannot be even topologically equivalent to a hyperbola since the hyperbola has two branches and is therefore a disconnected set.
%(See Lemma \ref{lemma-top-invariants}.)

\item $D^H_2=(x^2+y^2, 0)$ and $D^P_5=(x^2, 0)$.   Both have $\mR^2$ for $J_0$ and a ray for $J_1$, but preimages of points in $J_1$ for $D^H_2$ are all circles (except for the origin which has only itself as a preimage, and preimages of points in $J_1$ for $D^E_5$ are all pairs of lines (except for the origin which has only the $y$-axis as its preimage).  Circles and pairs of lines are not mapped to each other by affine maps, so these two maps are not affinely map equivalent.  They are not even topologically maps equivalent since circles and points are compact, and lines are not compact.
%(See Lemma \ref{lemma-top-invariants}.)

\item $D^E_2=(x^2 - y^2,x+y)$ and $P_3=(x^2,xy)$.   Both have a line for $J_0$ and a point for $J_1$, but $D^E_3$ has a single line for $J_0$, while $P_3$ has a double line for $J_0$. 
In addition (if one needs a second violation of affine map equivalence), 
all points not on $J_0$ for $D^E_3$ have unique preimages, while no points in $\mR^2$ have unique preimages for $P_3$.
%(Again, see Lemma \ref{lemma-top-invariants}.)

\end{enumerate}
\end{lemma}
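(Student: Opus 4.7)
The unifying principle is that if $g = k \circ f \circ h^{-1}$ with $h$ and $k$ invertible affine maps, then $g(\mR^2) = k(f(\mR^2))$ and $g^{-1}(q) = h\bigl(f^{-1}(k^{-1}(q))\bigr)$ for each $q \in \mR^2$. Hence any property of the range or of individual point preimages that is preserved by invertible affine bijections is an invariant of the affine map equivalence class. The useful such invariants for this lemma are convexity of the range, the topological type of a generic preimage (in particular its number of connected components and its compactness), and the multiset $\{|f^{-1}(q)| : q \in \mR^2\}$ of preimage cardinalities. My plan is to exhibit, for each of the four groups, one such invariant on which the two normal forms disagree.

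For part~1, I compute $D^H_1(\mR^2) = \{(u,v) : u \geq v^2\}$, a closed parabolic epigraph which is convex, while $D^E_1(\mR^2) = \{(u,v) : u \geq -v^2\}$ is non-convex: the points $(-1, 1)$ and $(-1, -1)$ both lie in it, but their midpoint $(-1, 0)$ does not. For parts~2 and~3 I pick a generic critical value and read off the topology of its preimage. Under $D^E_3$ the preimage of $(a,0)$ with $a \neq 0$ is the hyperbola $x^2 - y^2 = a$, which has two components, whereas under $D^P_3$ it is the connected parabola $y = a - x^2$; under $D^H_2$ the preimage of $(a, 0)$ with $a > 0$ is a circle, which is compact, whereas under $D^P_5$ it is the pair of parallel lines $x = \pm\sqrt{a}$, which is not. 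Since invertible affine maps preserve connectedness and compactness, neither pair can be affinely equivalent.

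For part~4 the cleanest invariant is the preimage-cardinality multiset. The substitution $u = x - y$, $v = x + y$ turns $D^E_2$ into $(u, v) \mapsto (uv, v)$, which is injective on the set $\{v \neq 0\}$; hence a generic point has exactly one $D^E_2$-preimage. By contrast $P_3(x,y) = P_3(-x,-y)$ for every $(x,y)$, so a point $(a, b)$ with $a > 0$ has exactly two $P_3$-preimages. The cardinality multisets $\{0, 1, \infty\}$ and $\{0, 2, \infty\}$ therefore differ and cannot be matched by any bijection, let alone an affine one. As an alternative I can note that $\det D(D^E_2) = 2(x+y)$ is a degree-one polynomial while $\det D(P_3) = 2x^2$ has degree two, and that under affine equivalence $\det Df$ is altered only by a nonzero scalar and an affine substitution of variables; polynomial degree (and the factorization pattern distinguishing a single line $J_0$ from a double line $J_0$) is thereby preserved.

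The only real obstacle is conceptual rather than computational: in each of the four groupings the $J_0$-$J_1$ data already agree, so Lemma~\ref{lemma-JfJg} is not enough and I must hunt for a finer invariant. Once the invariant is chosen---convexity, component count, compactness, preimage count, or degree of $\det Df$---the verification on the explicit normal forms is a one-line check.
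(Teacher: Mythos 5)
Your proposal is correct and follows essentially the same route as the paper: convexity of the range for the pair $\{D^E_1, D^H_1\}$, connectedness of point preimages for $\{D^E_3, D^P_3\}$, compactness of point preimages for $\{D^H_2, D^P_5\}$, and the preimage-cardinality set together with the single-line-versus-double-line (degree of $\det Df$) distinction for $\{D^E_2, P_3\}$, all justified by the same invariance principle the paper records in Lemma \ref{lemma-top-invariants}. Your write-up simply makes the paper's sketch explicit (the parabolic epigraphs, the midpoint counterexample, the $(u,v)\mapsto(uv,v)$ substitution), which is a welcome level of detail but not a different argument.
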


Lemmas \ref{lemma-elliptic} through \ref{lemma-deg-parabolic} guarantee that there are no more than $18$ affine map equivalence classes.
Lemmas \ref{lemma-JfJg} and \ref{lemma-inequivalent}
show that no two of the $18$ representatives listed in Theorem \ref{thm-ame} are equivalent, and thus that there are exactly $18$ equivalence classes. 
The form of $J_0$ and $J_1$ is easily computed since it needs to be done for only these 18 individual representatives.
%See, for example, \cite{NPM} or \cite{DGRRV}.
That the same description of $J_0$ and $J_1$ holds for all maps in the same affine map equivalence class comes from Lemma \ref{lemma-JfJg}.
Similarly, the cardinality of preimages is easily determined for each of the $18$ individual representatives.
Lemma \ref{lemma-top-invariants} guarantees the same preimage cardinalities for all maps in the same affine map equivalence class.
\end{proof}

%%%%%%%%%%%%%%%%%%%%%%%%%%%%%%%%%%%%%%%%%%%%%%%%%%%%%%%%%%%%%%%%

%\newpage

\section{Consequences}
\label{s-consequences}

Several consequences follow immediately from Theorem \ref{thm-ame} and Theorem \ref{thm-J0J1}.

\subsection{Relationship between affine map equivalence and critical set equivalence}
The equivalence classes for these two classifications are very close to being the same.
Only the first three pairs in Lemma \ref{lemma-inequivalent} are 
critical set equivalent, but not affinely map equivalent.
Identifying these three pairs takes us from affine map equivalence to critical set equivalence.
If one thinks geometrically, affine map equivalence of a pair of maps requires a relationship on the whole domain and range of the maps, while critical set equivalence requires a relationship only on the critical sets $J_0$ and $J_1$.
At first glance, it is seems surprising that these two classifications should be so close.
That is, it is surprising that knowledge of only critical sets forces a relationship on the whole domain and range.
This seems somewhat plausible if we rephrase the relationship as: if we know what kind of folds a surface has, then we can deduce geometric properties of the whole surface.
Of course this intuitive explanation is incomplete because not all of the critical sets correspond to folds.

\subsection{Proofs in \cite{NPM}}
\label{ss-shortCSEproofs}
At the time we (the coauthors of this paper) proved the $J_0$-$J_1$
classification theorem in \cite{NPM}, we had not yet established the Affine Map Equivalence Theorem.  
We did not even know whether the set of affine map equivalence classes was finite.
Consequently, we expended a lot of effort in \cite{NPM} to prove, for example, that all quadratics maps with critical set an ellipse had as $J_1$ a closed curve with three cusps.
This required carrying many parameters along through substantial computations.
Now, with the affine map equivalence theorem, we merely need to establish this result for the map $E_1$ (or any other single map in its equivalence class), and then note that the three cusps are preserved by affine maps.
This saves many pages of calculations that we had previously thought to be necessary.

\subsection{Proofs of geometric equivalence in \cite{DGRRV}}
\label{ss-geomequiv}

In \cite{DGRRV} (see also the earlier preprint \cite{GRRV}), the authors claim that all planar quadratic maps with ellipses for $J_0$ are $C^\infty$ map equivalent to each other.
Theorem \ref{thm-ame} is even stronger: all such maps are affinely map equivalent to $E_1$.
The extensive proofs in \cite{DGRRV} to arrive at this conclusion are no longer necessary.
For example, the authors also prove, but through quite different calculations from \cite{NPM}, that any quadratic map with an ellipse for $J_0$ has image $J_1$ a closed curve with three cusp points.
In addition, they prove that all such maps send $J_0$ injectively onto $J_1$.
We need only establish this result for one 
map in the equivalence class, and the affine equivalence will ensure that these properties will hold for all maps in the equivalence class.
For example, the map $(x^2-y^2+2x, 2xy -2y)$ (this is the same as $z^2+2\overline{z}$ in complex coordinates, and is in the affine map equivalence class of $E_1$)
is easily shown to have the unit circle as its $J_0$, and
image parametrized (bijectively) by $\theta \in [0,2 \pi)$ as $e^{2i \theta} + e^{-i\theta}$.
This last expression is a standard parametrization of the deltoid, a three-cusped hypocycloid.
Thus, all maps in class $E_1$ have a $J_1$ that not only is a three-cusped curve, but is additionally an affine image of a deltoid.

Moreover, the authors in \cite{DGRRV} claim that there is a finite set of $C^\infty$ map equivalence classes, although, for proof of this claim, they refer to previous work \cite{BGVR}, which focuses on other results and does not explicitly enumerate these equivalence classes.
Their claim of finiteness of equivalence classes for $C^\infty$ map equivalence becomes a straightforward corollary of our stronger statement of the Affine Map Equivalence Theorem.
A complete, explicit enumeration of the fifteen $C^\infty$ map equivalence classes is given in Theorem \ref{thm-OME} below.

\section{Discussion}
\label{s-discussion}

\subsection{Topological  and other map equivalences}
\label{ss-other-equiv}

The comparisons in subsection \ref{ss-geomequiv} between $C^\infty$ map equivalence and affine map equivalence 
for planar quadratic maps suggest looking at map equivalences via coordinate changes that are $C^0$ (homeomorphisms), $C^k$, $C^\infty$ $C^\omega$ (analytic) or even polynomial.
Of course, not all such coordinate changes preserve the class of quadratic maps of the plane.
This is an argument that affine map equivalence is the most natural
map equivalence to consider.
Nevertheless, we make the following observations.
Since affine maps fall into all of the above categories, then the $18$ affine map classes can only collapse when moving from affine to any other type of map equivalence, such as in the following lemma.

\begin{lemma}
\label{lemma-groups}
The maps in each of the following sets are affine map inequivalent, but 
polynomially map equivalent.

\begin{enumerate}
\item $\{D^E_1=(x^2-y^2, y), D_1^H=(x^2+y^2, y), D_2^P=(x^2,y)\}$
\item $\{ D^P_3=(x^2 + y,0), D^P_4=(x^2,x) \}$.
\end{enumerate}
\end{lemma}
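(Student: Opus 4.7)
The lemma has two parts for each set: affine inequivalence, which is a corollary of earlier results, and polynomial map equivalence, which requires an explicit construction of polynomial diffeomorphisms.

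Affine inequivalence is essentially free. In the first set, $D^P_2$ has $J_1$ a line while $D^E_1$ and $D^H_1$ have $J_1$ a parabola, so by Lemma \ref{lemma-JfJg} $D^P_2$ is affinely map inequivalent to the other two; the inequivalence of $D^E_1$ and $D^H_1$ is precisely part 1 of Lemma \ref{lemma-inequivalent}. In the second set, $D^P_3$ and $D^P_4$ have $J_1$ a line and a parabola respectively, so Lemma \ref{lemma-JfJg} again gives the inequivalence.

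For polynomial equivalence in the first set, observe that all three maps agree in the second component and differ in the first only by a quadratic in $y$. The plan is to keep $h$ the identity and absorb the correction into $k$ via a polynomial shear: set $k_\pm(u,v)=(u\pm v^2,v)$, which is a polynomial diffeomorphism of $\mR^2$ with polynomial inverse $(u,v)\mapsto(u\mp v^2,v)$. A direct computation gives $k_+\circ D^P_2=D^H_1$ and $k_-\circ D^P_2=D^E_1$, and composing these shears identifies all three maps polynomially.

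The second set is slightly trickier because $D^P_3$ and $D^P_4$ have different critical images, so no shear in the range alone can succeed; one needs a matching shear in the domain as well. The plan is to take $h(x,y)=(x^2+y,x)$ and $k(u,v)=(u^2+v,u)$, both polynomial diffeomorphisms of $\mR^2$ with explicit polynomial inverse $(a,b)\mapsto(b,a-b^2)$. A direct calculation then yields
\[
k\circ D^P_3(x,y)=\bigl((x^2+y)^2,\,x^2+y\bigr)=D^P_4(x^2+y,x)=D^P_4\circ h(x,y).
\]
The only potential obstacle is verifying that the candidate $h$ and $k$ are genuine diffeomorphisms rather than mere polynomial bijections, but in each case the inverse is itself polynomial and can be written down by inspection, so that step is routine.
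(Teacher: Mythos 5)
Your proposal is correct and matches the paper's own proof: for the first set the paper uses exactly the shears $(x\pm y^2,y)$ in the range (written in the direction $D^P_2=k_1D^E_1=k_2D^H_1$ rather than yours), and for the second set the paper's $h(x,y)=k(x,y)=(y,x-y^2)$ are precisely the inverses of your $h$ and $k$, yielding the same equivalence with the orientation reversed. Your explicit treatment of the affine inequivalences via Lemma \ref{lemma-JfJg} and Lemma \ref{lemma-inequivalent} is also the intended argument, which the paper leaves implicit.
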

\begin{proof}
\begin{enumerate}
\item Let $k_1(x,y)=(x+y^2, y)$, and $k_2(x,y)=(x-y^2, y)$.
Then $D_2^P=k_1 D^E_1= k_2 D_1^H$.
\item Let $h(x,y)=(y, x-y^2)$, and $k(x,y)=(y,x-y^2)$.
Then $D^P_3 h = k D^P_4=(x,0)$.
\end{enumerate}
\end{proof}

This lemma provides a collapsing of equivalence classes from affine map equivalence to polynomial map equivalence, and therefore to any $C^k$ map equivalence.  In the rest of this section, we argue that there is no further collapsing because of invariants of the coarsest class we consider, $C^0$ map equivalence.

We have already noted in Lemma \ref{lemma-JfJg} that
if $f$ and $g$ are map equivalent via $g=k \circ f \circ h^{-1}$, and $h$ and $k$ are both at least $C^1$, then 
$J^g_0=h(J_0^f)$ and $J^g_1=k(J^f_1)$.
There are useful topological properties which are also preserved.

 \begin{lemma}
 \label{lemma-top-invariants}
 Let $f$ and $g$ be maps from a space $X$ to itself.
 If $g=k \circ f \circ h^{-1}$, where $h$ and $k$ are homeomorphisms of $X$, then preimages and images of corresponding sets are homeomorphic.  More specifically,
\begin{enumerate}
\item Preimages of corresponding points are homeomorphic
\item The range of $f$ and $g$ are homeomorphic,

\item If, in addition to the theorem hypotheses, $h$ and $k$ are affine (and nonsingular), then the range of $f$ and $g$ must have the same convexity.

\end{enumerate}
\end{lemma}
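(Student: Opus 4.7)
The plan is to unpack the conjugacy $g = k \circ f \circ h^{-1}$ directly and apply standard properties of homeomorphisms and (for part 3) of nonsingular affine maps. Each of the three items reduces to a one-line computation together with a general fact about the relevant class of coordinate changes. So the proof is essentially bookkeeping: the challenge is notational clarity rather than depth.

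For part (1), I would pick any $q$ in the range of $g$, set $p = k^{-1}(q)$, and invert the conjugacy to obtain
\[ g^{-1}(q) = (k \circ f \circ h^{-1})^{-1}(q) = h\bigl(f^{-1}(k^{-1}(q))\bigr) = h\bigl(f^{-1}(p)\bigr). \]
Since $h$ is a homeomorphism of $X$, its restriction to the subset $f^{-1}(p) \subseteq X$ is a homeomorphism onto its image $g^{-1}(q)$. The identification of ``corresponding points'' is then just $p \leftrightarrow q = k(p)$. For part (2), the analogous unpacking gives
\[ g(X) = k\bigl(f(h^{-1}(X))\bigr) = k\bigl(f(X)\bigr), \]
because $h^{-1}$ is a bijection of $X$ onto itself. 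Thus the range of $g$ is the image of the range of $f$ under the homeomorphism $k$, and the two ranges are homeomorphic.

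For part (3), the extra input is that a nonsingular affine map $k: \mR^n \to \mR^n$ is a bijection that sends line segments to line segments, and whose inverse $k^{-1}$ is also affine. Consequently, for any subset $S \subseteq \mR^n$, the set $S$ is convex if and only if $k(S)$ is convex (convexity is preserved in both directions). Applying this to $S = f(X)$ and using the identity $g(X) = k(f(X))$ from part (2), the ranges of $f$ and $g$ share the same convexity.

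The main (mild) obstacle is keeping the roles of $h, k, h^{-1}, k^{-1}$ straight, since preimages are transported by $h$ while images of ranges are transported by $k$. Beyond that, the only conceptual point worth emphasizing is why we need $k$ to be \emph{nonsingular} in part (3): without invertibility, convexity could only be shown to be preserved in one direction, which would not suffice for the ``same convexity'' conclusion used later to distinguish $D_1^E$ from $D_1^H$.
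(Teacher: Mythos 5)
Your proposal is correct and follows essentially the same route as the paper: transport preimages by $h$ via the identity $h(f^{-1}(k^{-1}(q))) = g^{-1}(q)$, transport the range by $k$ via $g(X) = k(f(X))$, and invoke preservation of line segments (hence convexity, in both directions, since $k^{-1}$ is also affine) for part (3). Your version is if anything slightly cleaner than the paper's, which states the range identity with the roles of $f$ and $g$ garbled; your remark on why nonsingularity of $k$ is needed is a worthwhile addition but not a departure in method.
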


\begin{proof}
\begin{enumerate}
\item  If $k(y)=v$, 
then $h(f^{-1}(y)) = g^{-1}(v)$.
This last equality is easily established by showing that an element of either side is also an element of the other side.
\item  Since $h$ is onto, $k(g(X))=f(h(X))=f(X)$.
\item The ranges are now not only homeomorphic, but mapped to each other by the affine map $k$.
Affine maps take line segments to line segments, and therefore convex sets to convex sets.

\end{enumerate}
\end{proof}

Note that homeomorphic preimage sets requires the same cardinality of preimage sets, so this property alone immediately rules out topological map equivalence for many of the $18$ representative maps in the Affine Map Equivalence Theorem.
See the rightmost column in Table \ref{table-ame} where the cardinality of preimages for each representative (and therefore each map in its affine map equivalence class) is listed.
These cardinalities are easily determined algebraically from the representative example, and they can be verified by studying the figures in Fig.~\ref{fig-18phase}.
For example, preimages of points for $E_1$ can only have cardinalities of $2,3$ or $4$.
No other of the $18$ representatives has this set of preimage cardinalities, so $E_1$ is inequivalent to any of the other affine map classes, even under $C^0$ map equivalence.

Further, maps with preimage cardinalities of $\infty$ can be distinguished by other topological properties such as compactness or connectedness.  
For example, $D^E_3$, $D^H_2$, and $D^P_3$ are distintguished in topological map equivalence since preimages of points in the respective ranges can be hyperbolas (noncompact, nonconnected), circles (compact, connected), and parabolas (noncompact, connected).
Of course, $D^H_2$ was already distinguished since it has a point, the origin, with only one preimage.

In addition, $C^0$ (homeomorphic) map equivalence preserves ``sets of merging preimages'', as defined in pioneering work on planar endomorphisms by Mira and coworkers \cite{GM, FGKMM}.
Often these sets of merging preimages coincide with our definition of $J_1$, so sets with different topological descriptions of $J_1$ will be in distinct $C^0$ map equivalence classes.

Finally, combining Lemma \ref{lemma-top-invariants} with \ref{lemma-JfJg}, cardinalities of preimages restricted to critical sets are preserved.
In particular, maps topologically map equivalent to $E_1, E_2, H_1, P_1, D^E_1, D^H_1$, and $D^P_2$ are all injective on their critical sets.

%Since the counting of primages is dealt with in other work \cite{NPM, DGRRV}, we will not repeat them here.They can largely be inferred from the images in Fig. \ref{fig-18phase}.
%{\color{red} Bruce: I think counting of primages is not included in \cite{NPM}. And \cite{DGRRV} mentions only the nondegenerate cases. With our theorem 2.1, the number of preimages of components separated by $J_1$ can also be determined straightforward by these normal forms.}

Lemma \ref{lemma-top-invariants} leads to the following classification theorem.

\begin{theorem}
\label{thm-OME}
There are exactly fifteen map equivalence classes for quadratic maps of the plane where the bijective coordinate changes are restricted to any one of the following groups: 
$C^0, C^1, ..., C^k, C^\infty, C^\omega$, or polynomial.
The equivalence classes agree with the $18$ affine map equivalence classes in Table \ref{table-ame} except for the identification of the two groups in Lemma
\ref{lemma-groups}: $\{D^E_1, D^H_1, D^P_2\}$ and $\{D^P_3, D^P_4\}$.
\end{theorem}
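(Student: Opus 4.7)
The plan is a sandwich argument that uses the affine classification as the top end of a chain of equivalences and $C^0$ map equivalence at the bottom. Let $\mathcal{N}(\sim)$ denote the number of equivalence classes of planar quadratic maps under the equivalence $\sim$. Since every affine bijection is polynomial, every polynomial bijection is $C^\omega$, and $C^\omega \Rightarrow C^\infty \Rightarrow C^k \Rightarrow C^0$, one obtains the chain
$\mathcal{N}(\text{affine}) \geq \mathcal{N}(\text{poly}) \geq \mathcal{N}(C^\omega) \geq \mathcal{N}(C^\infty) \geq \mathcal{N}(C^k) \geq \mathcal{N}(C^0)$.
By Theorem \ref{thm-ame}, the left end is $18$.

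For the upper bound I would invoke Lemma \ref{lemma-groups}, whose explicit polynomial conjugacies collapse $\{D^E_1, D^H_1, D^P_2\}$ into a single class and $\{D^P_3, D^P_4\}$ into a single class. This cuts the total by $(3-1)+(2-1)=3$, giving $\mathcal{N}(\text{poly}) \leq 15$. For the lower bound I need to show $\mathcal{N}(C^0) \geq 15$, i.e., that the $15$ residual representatives remain pairwise inequivalent under mere homeomorphic coordinate changes. By Lemma \ref{lemma-top-invariants}, $C^0$ map equivalence preserves the multiset of preimage cardinalities, the homeomorphism type of each fiber $f^{-1}(y)$, and the homeomorphism type of the range. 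Inspection of the rightmost column of Table \ref{table-ame} shows that the cardinality multiset alone separates all but three groupings: $H_1$ vs $H_2$ (both with $\{0,1,2,3,4\}$), $D^E_2$ vs $D^H_2$ (both $\{0,1,\infty\}$), and the triple consisting of $D^E_3$, the merged class of $D^P_3$ and $D^P_4$, and $D^P_5$ (all $\{0,\infty\}$).

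Each remaining ambiguity I would resolve using finer topological invariants from Lemma \ref{lemma-top-invariants}. For $D^E_2$ vs $D^H_2$, the infinite fiber of $D^E_2$ is a line, while that of $D^H_2$ is a circle, so compactness separates them. For the $\{0,\infty\}$ triple, the range of $D^P_5$ is a ray, topologically $[0,\infty)$, whose endpoint fails to separate the space; the ranges of $D^E_3$ and of the merged parabolic class are lines, $\cong \mR$, in which every point separates. To split $D^E_3$ from the merged class, generic fibers are hyperbolas (two connected components) versus parabolas or lines (one component), and connectedness is a homeomorphism invariant. For $H_1$ vs $H_2$, I would invoke the ``set of merging preimages'' of Mira and coworkers mentioned after Lemma \ref{lemma-top-invariants}: this set has two connected components for $H_1$ (two disjoint branches of $J_1$) and is connected for $H_2$ (a parabola and a ray meeting at a single point).

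The main obstacle is the $H_1$ vs $H_2$ comparison, because $J_1$ itself is defined via $Df$ and is not literally a $C^0$ invariant; one must check that the merging locus, which genuinely is preserved by homeomorphic equivalences, coincides topologically with $J_1$ for these two representatives and thus inherits its connectedness type. Once the merging-preimage identification is in place, the three ambiguities are resolved and the chain is squeezed: $15 \leq \mathcal{N}(C^0) \leq \mathcal{N}(\text{poly}) \leq 15$, so every equivalence in the chain has exactly $15$ classes, obtained from Table \ref{table-ame} by the two identifications in Lemma \ref{lemma-groups}.
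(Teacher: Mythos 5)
Your proposal is correct and follows essentially the same route as the paper: the upper bound comes from the explicit polynomial conjugacies of Lemma \ref{lemma-groups}, and the lower bound from the $C^0$ invariants of Lemma \ref{lemma-top-invariants} (preimage cardinalities, fiber compactness/connectedness, range topology, and the Mira-style merging-preimage set for $H_1$ versus $H_2$), sandwiching every intermediate equivalence at fifteen classes. You are in fact somewhat more systematic than the paper in isolating exactly which residual pairs need which finer invariant, and the one delicate point you flag—that the merging locus rather than $J_1$ itself is the honest $C^0$ invariant—is handled just as informally in the paper's own discussion.
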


Note that these fifteen equivalence classes are not the same as the fifteen critical set equivalence classes.
The eight nondegenerate affine map equivalence classes, $E_i, H_i, P_i$ are all distinct under any version of map equivalence, from
$C^0$ through affine, and critical set equivalence.
Differences occur only in the degenerate classes.

\subsection{Algebraic terminology}
\label{ss-algebraic}
From a purely algebraic point of view, we can view map equivalences as orbits of group actions on sets.
If we let the sets be all quadratic maps, as in eq.~(\ref{eq-Q}),
and the group be pairs of invertible affine maps $(h,k)$ of $\mR^2$, and the action of the group on the set be $Q \rightarrow k \circ Q \circ h^{-1}$, then the affine map equivalence classes are the $18$ distinct orbits of elements of the quadratic maps under this group action.

\subsection{Dynamics}
\label{ss-dynamics}
Dynamical equivalence via topological conjugacy is a much finer classification since the changes of coordinates must be the same on both the domain and range ($h = k$).
But the map equivalence classification suggests a starting point for further dynamical studies.
The Henon maps \cite{Henon}, for example, are a study of maps in our class $D^P_1$, and \cite{Nien98}, for example is the beginning of a study of maps in our classes $E_1$ and $E_2$.
The complex quadratic family $z^2+c$ is a special subset of maps in our class $E_2$.

\subsection{Jacobian Conjecture}
The Jacobian Conjecture states that polynomial maps with nonzero constant Jacobian determinant are invertible with polynomial inverses.
The affine map classification provides an easy proof of a special case of this Conjecture.

\begin{lemma}
If $Q:\mR^2 \rightarrow \mR^2$ is a quadratic map with $\det(DQ)$ a nonzero constant,
Then $Q$ has a quadratic polynomial inverse.
\end{lemma}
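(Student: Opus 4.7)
The plan is to reduce $Q$ to the unique affine-map-equivalence normal form whose critical set is empty, write down an explicit polynomial inverse there, and pull it back through the affine changes of coordinates. The engine is Theorem \ref{thm-ame}.

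First I would observe that if $\det(DQ)$ is a nonzero constant, then $\det(DQ(x,y))$ never vanishes, so the critical set $J_0^Q$ is empty. Scanning the $J_0$ column of Table \ref{table-ame}, the unique affine map equivalence class with empty critical set is $D^P_1$, whose representative is $(x^2+y,\,x)$. Hence Theorem \ref{thm-ame} supplies nonsingular affine maps $h, k : \mR^2 \to \mR^2$ satisfying $k \circ Q \circ h^{-1} = D^P_1$, equivalently $Q = k^{-1} \circ D^P_1 \circ h$.

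Next I would write the inverse of $D^P_1$ explicitly: setting $(u,v)=(x^2+y,\,x)$ gives $x=v$ and $y=u-v^2$, so $(D^P_1)^{-1}(u,v)=(v,\,u-v^2)$, a quadratic polynomial bijection of $\mR^2$. Therefore $Q$ is a bijection and $Q^{-1} = h^{-1} \circ (D^P_1)^{-1} \circ k$. Since pre- and post-composition with affine maps cannot raise the degree of a polynomial, the components of $Q^{-1}$ are polynomials of degree at most $2$, yielding the desired quadratic polynomial inverse.

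There is essentially no obstacle here: the hard work has already been done by the Affine Map Equivalence Theorem. The only nontrivial input is recognizing that the condition ``$J_0 = \emptyset$'' isolates a single row of Table \ref{table-ame}, and that the representative in that row is visibly invertible by a quadratic polynomial.
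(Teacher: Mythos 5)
Your proposal is correct and follows essentially the same route as the paper: use the emptiness of $J_0^Q$ together with Theorem \ref{thm-ame} to reduce to the representative $D^P_1=(x^2+y,x)$, invert it explicitly as $(y,x-y^2)$, and conjugate back by the affine maps. The only difference is cosmetic (which side of the conjugation you name $h$ and $k$), so there is nothing to add.
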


\begin{proof}
Since $J_0^Q = \phi$. Hence by Theorem \ref{thm-ame}, we have
$Q = k (x^2 +y, x) h^{-1}$, for some nonsingular affine maps $h$ and $k$.
Since  $(x^2+y, x)^{-1} = (y, x - y^2)$,
we have $Q^{-1} = h (y, x - y^2) k^{-1}$ which is a quadratic polynomial map.
\end{proof}
%%%%%%%%%%%%%%%%%%%%%%%

\section{Proofs of the Lemmas}
\label{s-lemma-pf}

Before launching into the proofs, we point out some preliminary observations and notational conventions that we use.
Recall the most general planar quadratic map from eq.~(\ref{eq-Q}).
$Q(x,y)=(a_{20}x^2+a_{11}xy+a_{02}y^2+a_{10}x+a_{01}y+a_{00}, b_{20}x^2+b_{11}xy+b_{02}y^2+b_{10}x+b_{01}y+b_{00})$.
%We use the notation  $F \rightarrow G$ to mean that $F$ and $G$ are affine map equivalent, up to  $F \rightarrow G$, $h$ to mean that map $Fh$ is of the form $G$ and $F \rightarrow G$, $k$ to mean that map $kF$ is of the form $G$, and $F \rightarrow G$, $(k,h)$ to mean that map $kFh$ is of the form $G$ where $h$ and $k$ are invertible affine maps.

\begin{itemize}

\item The constant terms $a_{00}$ and $b_{00}$ can be eliminated by the translation
$k(x, y) = (x - a_{00},\; y - b_{00})$ at any step.
To save space (and work), we will not include them in any step of the computation.
That is, when we say $Q_i h, k Q_i $ or $kQ_i h$ is ``of the form'' $Q_j$, we might need to compose on the left with this additional translation before we obtain equality to $Q_j$.

 \item We reuse the coefficient notation $a_{ij}$ and $b_{ij}$ after all coordinate changes.  Thus, if $Q_i \rightarrow Q_j$, prior to the coordinate change, $a_{20}$ is the $x^2$ coefficient for the first component of $Q_i$, while after the coordinate change,
$a_{20}$ is the $x^2$ coefficient for the first component of $Q_j$.
We also reuse the $Q_i$ notation in the proof of each lemma.
The notation for the eighteen normal forms, however, is fixed throughout the paper.

\item When $h$ and $k$ (and therefore their inverses) are both translations, the quadratic terms of $kFh$ remain the same as $F$. 
Translations will often be the last steps in our coordinate changes.

\item For emphasis, we box maps in the organizational diagrams that are in one of the eighteen normal forms in Theorem \ref{thm-ame}.

\item We often abuse correct formal notation and write $Q$ as a shorthand for $Q(x,y)$.

\end{itemize}
%
%\newpage

\subsection{Proof of Lemma \ref{lemma-homogeneous}}

The organization of the proof is given by the schematic diagram of coordinate changes in table \ref{table-homogeneous}.

\begin{table}[ht]
\label{table-homogeneous}
\begin{tabular}{ccccccccccc} % 11 cols
$Q_0$&$\rightarrow$&$Q_1$&$\rightarrow$ &$Q_2$ &$\rightarrow$ &$Q_3$ &$\rightarrow$ &$Q_4$&$\rightarrow$ &\framebox[.8cm][l]{$E_2$}\\
& &&&&$\swarrow$&&&$\downarrow$&$\searrow$\\
&&&&$Q_5$ && &&\framebox[.8cm][l]{$P_{3}$} & &\framebox[.8cm][l]{$H_3$}\\
&&&$\swarrow$&$\downarrow$&$\searrow$\\
&&\framebox[.9cm][l]{
$D^E_{3}$}&&\framebox[.9cm][l]{$D^P_5$} && \framebox[.8cm][l]{$D^H_2$}& &\\

\end{tabular}
\caption{Organization of the coordinate changes to prove Lemma \ref{lemma-homogeneous}.  The six boxed forms are ``normal forms'' that appear in the list of 18 equivalence class representatives in Table \ref{table-ame}.}
\end{table}

\begin{comment}
\begin{align*}
Q1 & \rightarrow &Q_2 &\rightarrow Q_3 &\rightarrow Q_4\\
\downarrow& \searrow\\
Q1 & \rightarrow &Q_2 &\rightarrow &Q_3 &\rightarrow Q_4\\
\end{align*}
\end{comment}

As indicated in subsection \ref{ss-ame-proof}, this first lemma will deal with only the homogeneous quadratic terms.
For this lemma only, we will amend our terminology 
``$k \circ Q_i \circ h$ is of the form $Q_j$'' to mean that the form of the homogeneous coefficients match; 
$k \circ Q_i \circ h$ may have nonzero linear and constant terms, but $Q_j$ will only have homogeneous degree two terms, and can still be ``of the form" of $k \circ Q_i \circ h$. 

Begin with $Q_0(x,y)$ the most general homogeneous quadratic map.

\begin{itemize}
\item $Q_0(x,y) = (a_{20} x^2 + a_{11} xy + a_{02} y^2, b_{20} x^2 + b_{11} xy + b_{02} y^2)$.
\item $Q_0 \rightarrow Q_1$, where $Q_1$ 
has coefficient of the $xy$ term in $\det(DQ_1)(x, y)$, $X_{20:02} = a_{20}b_{02} - a_{02}b_{20}$, equal to $0$. 
Finding such a $Q_1$ is always possible because the $xy$ term in any conic section can be eliminated by a rotation.
%(Alternatively, the quadratic terms of $Q$, $\pi Q$ can be considered a quadratic form represented by a symmetric matrix which can be diagonalized.)
Letting $R_\theta$ be the rotation by the appropriate $\theta$, $Q_0 \rightarrow Q_1=R_\theta Q_0 R_\theta^{-1}$, we can ensure that its $X_{20:02} = 0$.

\item $Q_1 \rightarrow Q_2$, where
$Q_2$ has its $X_{20:02}=0$ and its $a_{20}\ne 0$.
If $a_{20}$ for $Q_1$ is already nonzero, then let $Q_2=Q_1$.
If $a_{20}=0$, but any of $a_{02}, b_{20}, b_{02}$ is nonzero, then let $S(x,y)=(y,x)$.
One of $SQ_1, Q_1S$, or $SQ_1S$ has its $a_{20}$ nonzero.
If all four of
$a_{20}, a_{02}, b_{20}, b_{02}$ are zero, then at least one of 
$a_{11}$ or $b_{11}$ is nonzero, or $Q_1$ would not be a quadratic map.
Letting $A=a_{11}$, $B=b_{11}$, and $h(x,y)=(x+y, x-y)$, $Q_1 h = (A x^2 - A y^2, B x^2 - B y^2)$.
If $A \ne 0$, let $Q_2=Q_1 h$.
If $A = 0$, $Q_1 h = (0, B x^2 - B y^2)$, with $B \ne 0$.
In this case,
$Q_2(x,y)=S Q_1 h(x,y) = (B x^2 - B y^2,0)$.
So in all possible cases,
$Q_2$ can be produced so that its $a_{20}\ne 0$;
in all cases $Q_2$ will still have its
$X_{20:02}=0$.

\item $Q_2 \rightarrow Q_3= (x^2 + a_{11} x y + a_{02} y^2,\; b_{11} x y)$.
Since $X_{20:02} = 0$, and $a_{20} \ne 0$, define $r = b_{20}/a_{20}$,
then $b_{02} = (a_{02}b_{20})/a_{20} = ra_{02}$. So $Q_2$ is a map of the form:
$Q_2(x, y) = (a_{20} x^2 + a_{11} x y + a_{02} y^2,\; ra_{20} x^2 + b_{11} x y + ra_{02} y^2)$.

We introduce an intermediate map $\tilde{Q}_2$:

Let $k(x, y) = (x, \; -r x + y)$, with $a_{20} \ne 0$.
Then $k Q_2$ is of form
$\tilde{Q}_2=(a_{20} x^2 + a_{11} x y + a_{02} y^2,\; b_{11} x y)$, where $a_{20}$ for $\tilde{Q}_2$ equals 
$a_{20}$ for $Q_2$, and is therefore still nonzero.

Continuing:
Let $k(x,y)=(x/a_{20}, \; y)$. 
Then $k \tilde{Q}_2$ is of form $Q_3$.

\item $Q_3 \rightarrow Q_4=(x^2 + a_{02} y^2,\;  b_{11} x y)$: 
Do if $b_{11}$ for $Q_3$ is nonzero:

Let $k(x, y) = (x - (a_{11}/b_{11})y, \;y)$. 
Then $k Q_3$ is of form $Q_4$.

($b_{11}$ for $Q_4$ is the same as $b_{11}$ for $Q_3$, and hence, still nonzero.)

\item $Q_4 \rightarrow E_2=(x^2 - y^2, xy)$. Do if $a_{02}$ for $Q_4$ is negative.
Introduce an intermediate map $\tilde{Q}_4$.
Let $h(x,y)=(x,y/\sqrt{-a_{02}})$.
Then $Q_4 h$ is of form $\tilde{Q}_4$.

($b_{11}$ for $\tilde{Q}_4$ equals $b_{11}/\sqrt{-a_{02}}$ for $Q_4$,  and hence, still nonzero.)

Continuing, let $k(x,y)=(x,y/b_{11})$.
Then $k \tilde{Q}_4$ is of form
$E_2$.

\item $Q_4 \rightarrow P_3=(x^2,xy)$.
Do if $a_{02}$ for $Q_4$ is zero.

Then  $Q_4=(x^2, b_{11}xy)$.
Let $k(x,y)=(x,y/b_{11})$.
Then $k Q_4=P_3$.

\item $Q_4 \rightarrow H_3=(x^2+y^2,xy)$.
Do if $a_{02}$ for $Q_4$ is positive.

Introduce an intermediate map $\tilde{Q_4}$.
Let $h(x,y)=(x,y/\sqrt{a_{02}})$.
Then $Q_4 h$ is of form $\tilde{Q_4}=(x^2+y^2,b_{11}xy)$.

($b_{11}$ for $\tilde{Q_4}$ equals $b_{11}/\sqrt{a_{02}}$ for $Q_4$,  and hence, still nonzero.)

Continuing, let $k(x,y)=(x,y/b_{11})$.
Then $k \tilde{Q_4}$ is of form $H_3$.

\item $Q_3 \rightarrow Q_5=(x^2+a_{02}y^2, 0)$.
Do if $b_{11}$ for $Q_3$ is zero:

Let $k(x,y)=(x - a_{11}y/2, y)$.
Then $k Q_3$ is of form $Q_5$.

\item $Q_5 \rightarrow D^E_3=(x^2-y^2,0)$.
Do if $a_{02}$ for $Q_5$ is negative.

Let $h(x,y)=(x,y/\sqrt{-a_{02}})$.
Then $Q_5 h=D^E_3$.

\item $Q_5 \rightarrow D^P_5=(x^2,0)$.
Do if $a_{02}$ for $Q_5$ is zero.
$D^P_5$ is already equal to $Q_5$.
 
\item $Q_5 \rightarrow D^H_2=(x^2+y^2,0)$.
Do if $a_{02}$ for $Q_5$ is positive.
Let  $h(x,y)=(x,y/\sqrt{a_{02}})$.
Then $Q_5 h=D^H_2$.

\end{itemize}

Note that all six termination points in the algorithm are without parameters and appear in the final list of eighteen equivalence class representatives.

%%%%%%%%%%%%%%%%%%%%%%%%%%%%%%%%%%%%%%%%%%%%%%%%%%%%%%%%%%%%
%\newpage
\subsection{Proof of Lemma \ref{lemma-elliptic}, 
Elliptic case}
This Lemma treats all cases with homogeneous quadratic part: $(x^2-y^2, xy)$.

\begin{table}[ht]
\label{table-ell-org}
\begin{tabular}{ccccccccc} % 9 cols
$Q_0$&$\rightarrow$&$Q_1$&$\rightarrow$ &$Q_2$ &$\rightarrow$ &$\framebox[.8cm][l]{$E_1$}$\\
&&$\downarrow$&$\searrow$\\
&&$\framebox[.9cm][l]{$E_2$}$&&$Q_3$&$\rightarrow$&$\framebox[.8cm][l]{$E_1$}$ \\

\end{tabular}
\caption{Organization of the coordinate changes to prove Lemma \ref{lemma-elliptic}.  The boxed forms are ``normal forms'' that appear in the list of eighteen equivalence class representatives in Table \ref{table-ame}. All have the same homogeneous part.}
\end{table}

\begin{itemize}

\item Begin with $Q_0(x,y)=(x^2-y^2+a_{10}x+a_{01}y, xy+b_{10}x+b_{01}y)$.
\item $Q_0 \rightarrow Q_1=(x^2-y^2+a_{10}x, xy+b_{10}x)$: 
Let $h(x, y) = (x - b_{01},\; y + a_{01}/2)$.
Then $Q_0 h$ is of form $Q_1$.

\item $Q_1 \rightarrow Q_2=(x^2-y^2+x, xy+b_{10}x)$.
Do if $a_{10} \ne 0$.
Let $h(x, y) = (\frac{1}{a_{10}} x,  \frac{1}{a_{10}} y)$, and
$k(x, y) = (\frac{1}{a_{10}^2} x , \frac{1}{a_{10}^2} y )$.
Then $k Q_1 h^{-1}(x, y) = k Q_1(a_{10} x,  a_{10} y) 
= k(a_{10}^2(x^2 - y^2 + x), a_{10}^2 xy + a_{10} b_{10} x)
= (x^2 - y^2 + x, xy + (b_{10}/a_{10}) x)$ 
which is a map of the form $Q_2$.

\item $Q_2 \rightarrow E_1=(x^2-y^2+x, xy)$.
This is most complicated transformation to determine,
so we separate its proof as its own Lemma \ref{lemma-ell-longcase} below.

\item $Q_1 \rightarrow Q_3=(x^2-y^2, xy+x)$.
Do if $a_{10} = 0$ and $b_{10} \ne 0$.

Let $h(x, y) = (\frac{1}{b_{10}} x,  \frac{1}{b_{10}} y)$, and
$k(x, y) = (\frac{1}{b_{10}^2} x , \frac{1}{b_{10}^2} y )$ which are two invertible affine maps.
Then, a straight forward computation yields $Q_3 h = k Q_1$.

\item $Q_3 \rightarrow E_1=(x^2-y^2+x, xy)$.
Let $h(x, y) = (\frac{-1}{2}y - \frac{1}{2},  \frac{1}{2}x)$, and
$k(x, y) = (\frac{-1}{4}x - \frac{1}{4}, \frac{-1}{4}y )$ which are
invertible affine maps.
A straight forward computation yields $ E_1 h = k Q_3$.

\item $Q_1 \rightarrow E_2=(x^2-y^2, xy)$.
Do if $a_{10} = 0$ and $b_{10} = 0$.
Then $Q_1$ already equals $E_2$.

\end{itemize}

This, along with the proof of Lemma \ref{lemma-ell-longcase} below, completes the proof of Lemma \ref{lemma-elliptic}.
$\Box$

\begin{lemma}
\label{lemma-ell-longcase}
Let $Q_2(x,y)=(x^2-y^2+x, xy+b_{10}x)$, and
$E_1(x,y)=(x^2-y^2+x, xy)$.
Then $Q_2$ and $E_1$ are affinely map equivalent.
\end{lemma}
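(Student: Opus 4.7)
The approach is to exploit the complex structure of the quadratic terms. Write $z=x+iy$ and compose on the left with the affine range map $(u,v)\mapsto(u,2v)$; then both $Q_2$ and $E_1$ acquire quadratic part $z^2$, and in complex notation they take the form $z\mapsto z^2+c(z+\bar z)$, using $x=(z+\bar z)/2$. Here $c=\tfrac{1}{2}+b_{10}i$ for $Q_2$ and $c_0=\tfrac{1}{2}$ for $E_1$; both are nonzero because their real parts equal $\tfrac{1}{2}$. The plan is to show that any two such forms with nonzero $c$ are affinely equivalent, then undo the initial range rescaling at the end.

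The first step is to complete the square: substituting $z=w-c/2$ in the domain turns $z^2+c(z+\bar z)$ into $w^2+c\bar w-c^2/4-|c|^2/2$, the $w$-linear terms cancelling by design. A translation in the range absorbs the constant, so each map is affinely equivalent to the cleaner canonical form $w\mapsto w^2+c\bar w$.

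The second step is to normalize the coefficient $c$. Substituting $w=\lambda v$ for a nonzero complex $\lambda$ in the domain and rescaling the range by $1/\lambda^2$ (both real-linear on $\mR^2$, being complex multiplications) transforms $w^2+c\bar w$ into $v^2+(c\bar\lambda/\lambda^2)\bar v$. With $\lambda=re^{i\theta}$, the new coefficient is $(c/r)e^{-3i\theta}$, whose modulus is controlled by $r>0$ and whose argument by $\theta$; choosing $r=\sqrt{1+4b_{10}^{2}}$ and $\theta=\tfrac{1}{3}\arctan(2b_{10})$ makes it equal to $c_0=\tfrac{1}{2}$, matching $E_1$'s canonical form.

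The main obstacle is bookkeeping: assembling the range rescaling (and its inverse), the domain translations completing the square for each of $Q_2$ and $E_1$, the range translations absorbing the resulting constants, and the complex scaling together into a single pair of invertible real-affine maps $(h,k)$ satisfying $k\circ Q_2=E_1\circ h$. Each ingredient is individually an invertible real-affine map, so the composition is as well. The entire argument hinges on $c\ne 0$; since $\mathrm{Re}(c)=\tfrac{1}{2}$ for every value of $b_{10}$, the construction always goes through.
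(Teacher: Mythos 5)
Your proof is correct, and it takes a genuinely different route from the paper's. The paper attacks the identity $E_1h=kQ_2$ head-on: matching coefficients gives twelve polynomial equations in the twelve unknown entries of $h$ and $k$, which are eliminated down to a single cubic $f(p_0)=(8b_{10}^2+2)^2p_0^3-(12b_{10}^2+3)p_0-1=0$; the intermediate value theorem supplies a positive root, and back-substitution (together with a check that the resulting $h,k$ are nonsingular) finishes the argument. Your approach instead exploits the complex structure: after the range change $(u,v)\mapsto(u,2v)$ both maps become $z\mapsto z^2+c(z+\bar z)$ with $c=\tfrac12+b_{10}i$ (resp.\ $c_0=\tfrac12$); completing the square with $z=w-c/2$ kills the holomorphic linear term and leaves the canonical form $w\mapsto w^2+c\bar w$; and the scaling action $w=\lambda v$, range divided by $\lambda^2$, sends $c\mapsto c\bar\lambda/\lambda^2=(c/r)e^{-3i\theta}$, which acts transitively on $\mathbb{C}\setminus\{0\}$. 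I checked the individual computations (the constant $-c^2/4-|c|^2/2$, the transformation law for $c$, and the choices $r=\sqrt{1+4b_{10}^2}$, $\theta=\tfrac13\arctan(2b_{10})$, valid since $\mathrm{Re}\,c=\tfrac12>0$) and they are all correct; the remaining assembly of the affine pair $(h,k)$ is indeed only bookkeeping, since each ingredient is an invertible real-affine map. What your approach buys: it is fully explicit (no appeal to the IVT), it isolates $c\in\mathbb{C}$ as the residual invariant and explains conceptually why there is a single class for $c\neq0$ (with $c=0$ giving $E_2$), and the cube root in $e^{-3i\theta}$ illuminates why the paper's elimination terminates in a cubic. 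What the paper's approach buys is uniformity: essentially the same twelve-equation scheme is reused verbatim for the hyperbolic case (Lemma on $H_1$), where the quadratic part is not conjugate to $z^2$ and the complex trick would need to be replaced by a different algebraic structure.
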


\begin{proof}
The goal is to eliminate the $b_{10}x$ term from $Q_2$.
Let $h(x, y) = (p_0 x + q_0 y + u_0, r_0 x + s_0 y + v_0)$, and
$k(x, y) = (p x + q y + u, r x + s y + v)$ be two invertible affine maps.
We shall show the existence, for any fixed $b_{10} \in \mR$, of $h$ and $k$ such that $E_1 h = k Q_2$.
This one vector polynomial equation is satisfied if and only if all 
corresponding terms in each component match.
This leads to twelve equations in the twelve unknown coefficients of $h$ and $k$.

Since  $h$ and $k$ are invertible affine maps,
\begin{equation} \label{e1} 
p_0s_0 - q_0 r_0 \neq 0
\end{equation} 
and  
\begin{equation} \label{e2}
ps - q r \neq 0.
\end{equation}
Then by comparing coefficients of $E_1 h = k Q_2$,
we have the system of equations:

\begin{eqnarray}
p_0^2 - r_0^2 & = & p  \label{ellipse_1} \\
2 p_0 q_0 - 2 r_0 s_0 & = & q \label{ellipse_2} \\
q_0^2 - s_0^2 & = & -p \label{ellipse_3} \\
2 p_0 u_0 - 2 r_0 v_0 + p_0 & = & b_{10} q + p \label{ellipse_4} \\
2 q_0 u_0 - 2 s_0 v_0 + q_0 & = & 0 \label{ellipse_5} \\
u_0^2 - v_0^2 + u_0 & = & u \label{ellipse_6} \\
p_0 r_0 & = & r \label{ellipse_7} \\
p_0 s_0 + q_0 r_0 & = & s \label{ellipse_8} \\
q_0 s_0 & = & -r \label{ellipse_9} \\
p_0 v_0 + r_0 u_0 & = & b_{10} s + r \label{ellipse_10} \\
q_0 v_0 + s_0 u_0 & = & 0 \label{ellipse_11} \\
u_0 v_0 & = & v \label{ellipse_12}
\end{eqnarray}

Note that variables $u$ and $v$ appear in one equation each:
equations (\ref{ellipse_6}) and (\ref{ellipse_12}).
Thus the remaining ten equations can be used to solve for the other ten variables.
The values of $u$ and $v$ can then be determined by these two equations.

Broadly speaking, we use eight of the equations to solve for the eight variables $q_0, u_0, r_0, s_0, v_0, p,q,s$ in terms of the remaining two variables $p_0$ and $r$.
Substituting these eight variables leaves us with two
coupled equations in $p_0$ and $r$.
Finally, we can solve for $r$ in terms of $p_0$, and we are left with a single (cubic) equation in $p_0$.
We do not find an explicit solution for $p_0$, but we use 
the intermediate value theorem to show a solution exists.
Then back substitution can be used to solve for the other eleven variables.
The existence of a solution establishes the equivalence of 
$Q_2$ and $E_1$.  Details of this computation follow.

(i) Claim $p_0 \neq 0$:
If $p_0 = 0$ then $r = 0$ by (\ref{ellipse_7}) and $q_0, r_0 \neq 0$ by (\ref{e1}).
Hence $s_0 = 0$ by (\ref{ellipse_9}). Hence $v_0 = 0$ by (\ref{ellipse_11}).
Since $p_0 = 0$ and $s_0 = 0$, we have $q = 0$ by (\ref{ellipse_2}).
Hence $r_0^2 = q_0^2 = -p$ by (\ref{ellipse_1}) and (\ref{ellipse_3}).
This forces $r_0=q_0=p=0$.
%Finally, $p_0 = 0$, $v_0 = 0$, $q = 0$ implies that $p=0$ (\ref{ellipse_4}).
Therefore we end up with $r_0 = q_0 = 0$, a contradiction.

(ii) Claim $s_0 \neq 0$: If $s_0 = 0$ then $r = 0$ by (\ref{ellipse_9}) and
$q_0, r_0 \neq 0$ by (\ref{e1}). Hence $p_0 = 0$ by (\ref{ellipse_7}), a
contradiction to previous paragraph.

(iii) Since $p_0 \neq 0$ and $s_0 \neq 0$,
from (\ref{ellipse_7}) and (\ref{ellipse_9}), we have

\begin{equation}\label{e3}
r_0 = \frac{r}{p_0}
\end{equation}
 and 
\begin{equation}\label{e4}
q_0 = \frac{-r}{s_0}.
\end{equation}

Hence $p_0^2 - (\frac{r}{p_0})^2 = p$ by (\ref{ellipse_1}) and
$(\frac{-r}{s_0})^2 - s_0^2 = -p$ by (\ref{ellipse_3}).
Simplified, we see that both $p_0^2$ and $s_0^2$ are positive
root of the quadratic equation: $x^2 - px -r^2 = 0$.
Hence 
\begin{equation}\label{e5}
p_0^2 = s_0^2 = \frac{p+\sqrt{p^2 + 4r^2}}{2}
\end{equation}

We choose $p_0 = s_0 > 0$ (solutions like $p_0=-s_0>0$ might also be possible, but we need only to show the existence of one solution).
%and try to solve the system of equations (\ref{ellipse_1}) ~ (\ref{ellipse_12}). 

From (\ref{ellipse_5}) and  (\ref{ellipse_11}) we have:
\begin{eqnarray*}
2 q_0 u_0 - 2 s_0 v_0  + q_0 & = & 0  \\
s_0 u_0 + q_0 v_0  & = & 0 
\end{eqnarray*}

Solve this linear system with unknowns $u_0$ and $v_0$,
we get  
\begin{equation}\label{e6}
v_0 = \frac{s_0 q_0}{2(q_0^2 + s_0^2)} 
\end{equation}
and
\begin{equation}\label{e7}
u_0 =  \frac{- q_0^2}{2(q_0^2 + s_0^2)}  
\end{equation}

Then $r_0 = -q_0 = \frac{r}{p_0}$, 
$u_0 = \frac{- r^2}{2(r^2+p_0^4)}$,
$v_0 = \frac{- p_0^2 r}{2(r^2+p_0^4)}$.
Now, from (\ref{ellipse_2}), we have
$ q = 2 p_0 q_0 - 2 r_0 s_0 = 2 p_0 \frac{-r}{s_0}  - 2 \frac{r}{p_0} s_0
= -4r$. From (\ref{ellipse_8}), $s = p_0 s_0 +q_0 r_0 = \frac{-(r^2-p_0^4)}{p_0^2}$.
Finally $p = p_0^2 - r_0^2 = \frac{-(r^2-p_0^4)}{p_0^2}$ (\ref{ellipse_1}). Note that
$p=s$.

{\bf Two equations and two unknowns}
The eight equations we have used so far are used to substitute eight variables in terms of $p_0$ and $r$.

The unused equations are (\ref{ellipse_4}) and  (\ref{ellipse_10}). Reorganizing slightly and relabeling these equations, we have
 
\begin{eqnarray}
2 p_0 u_0 - 2 r_0 v_0 + p_0 - p & = & b_{10}q  \label{ellipse2_1} \\
p_0 v_0 + r_0 u_0 -r = b_{10} s \label{ellipse2_2}
\end{eqnarray}
This leaves us with the following coupled system of two equations in the two variables $p_0$ and $r$:

\begin{eqnarray}
r^2 - p_0^4 + p_0^3 + 4r b_{10} p_0^2 & = & 0 \label{ellipse2_3} \\
2b_{10} (r^2 - p_0^4) -(2 p_0 + 1) p_0 r & = & 0 \label{ellipse2_4}
\end{eqnarray}

{\bf One equation and one unknown!}
Compute
(\ref{ellipse2_4}) $- 2b_{10}$(\ref{ellipse2_3}).
This is a linear function in $r$, which we can solve for $r$ to get:
\begin{equation}\label{ellipse_r}
r = - \frac{2 b_{10} p_0^2}{(8 b_{10}^2 + 2) p_0 + 1}. 
\end{equation}

Plug back into (\ref{ellipse2_3}), and simplify, we get  
$(64 b_{10}^4 + 32 b_{10}^2 + 4) p_0^6 + (-12 b_{10}^2-3) p_0^4 - p_0^3 = 0$.

Since $p_0$ is required to be positive, we can factor out $p_0^3$. We are left trying to find a positive root to

\begin{equation}\label{ellipse_p_0}
f(p_0) := (64 b_{10}^4 + 32 b_{10}^2 + 4) p_0^3 + (-12 b_{10}^2-3) p_0 - 1 = 0.
\end{equation}

Note that $f(0) = -1 < 0$.
Since the cubic coefficient is equal to 
$(8b_{10}^2+2)^2$, and this is positive for any real $b_{10}$, then $f(p_0) >0$ if $p_0$ is large enough.

Therefore, by the intermediate value theorem, for any given $b_{10}\in \mR$, $f$ must have at least one positive real solution $p_0$ for (\ref{ellipse_p_0}).

Backward substitution gives us the existence of a solution for all twelve variables, and therefore for the transformations $h$ and $k$.
Note that 
$p_0  s_0 - q_0 r_0 = p_0^2 + r_0^2 > 0$ and 
$p s - q r = p^2 + 4 r^2 >0$ . Hence the transformations $h$ and $k$ are nonsingular  affine maps.

That is, $Q_2$ is affinely equivalent to $E_1$ for any $b_{10} \in \mR$.

\end{proof}

%%%%%%%%%%%%%%%%%%%%%%%%%%%%%%%%%%%%%%%%%%%%%%%%%%%%%%%%%%%%%%%%%%%%%
%\newpage
\subsection{Proof of Lemma \ref{lemma-hyperbolic} Hyperbolic case}

This Lemma treats all cases with homogeneous quadratic part: $(x^2+y^2, xy)$.

\begin{table}[ht]
\label{table-hyp-org}
\begin{tabular}{ccccccccc} % 9 cols
&&&&$Q_3$&$\rightarrow$&$\framebox[.8cm][l]{$H_1$}$\\
&&&$\nearrow$\\

$Q_0$&$\rightarrow$&$Q_1$&$\rightarrow$ &$Q_2$ &$\rightarrow$ &$\framebox[.8cm][l]{$H_1$}$\\
&&$\downarrow$&&&$\searrow$\\
&&$\framebox[.9cm][l]{$H_3$}$&&&&$\framebox[.8cm][l]{$H_2$}$ \\

\end{tabular}
\caption{Organization of the coordinate changes to prove Lemma \ref{lemma-hyperbolic}.  The boxed forms are ``normal forms'' that appear in the list of eighteen equivalence class representatives in Table \ref{table-ame}. All have the same homogeneous part.}
\end{table}

\begin{itemize}

\item Begin with $Q_0(x,y)=(x^2+y^2+a_{10}x+a_{01}y, xy+b_{10}x+b_{01}y)$.

\item $Q_0 \rightarrow Q_1=(x^2+y^2+a_{10}x, xy+b_{10}x)$. 
Let $h(x, y) = (x - b_{01},\; y - a_{01}/2)$.
Then $Q_0 h$ is of form $Q_1$.

\item $Q_1 \rightarrow Q_2=(x^2+y^2+x, xy+b_{10}x)$.
Do if $a_{10} \neq 0$.
Let $h(x, y) = (\frac{1}{a_{10}} x,  \frac{1}{a_{10}} y)$, and
$k(x, y) = (\frac{1}{a_{10}^2} x , \frac{1}{a_{10}^2} y )$.
Then $h^{-1} = (a_{10} x,  a_{10} y)$, hence
$k Q_1 h^{-1}(x, y) = k Q_1 (a_{10} x,  a_{10} y) 
= k(a_{10}^2(x^2 + y^2 + x), a_{10}^2 xy + a_{10} b_{10} x)
= (x^2 + y^2 + x, xy + (b_{10}/a_{10}) x)$ 
which is a map of the form $Q_2$.

\item $Q_2 \rightarrow H_1=(x^2+y^2+x, xy)$.
Do if $b_{10} \neq 0$ and $b_{10} \neq \pm \frac{1}{2}$.
This is most complicated transformation to determine,
so we separate its proof as its own Lemma \ref{lemma-hyp-longcase} below.

\item $Q_2 \rightarrow H_2=(x^2+y^2+x, xy+ \frac{1}{2}x)$.
Do if $b_{10} = \pm \frac{1}{2}$.
If $b_{10}=+\frac{1}{2}$, then $Q_2$ already is equal to $H_2$.
If $b_{10}=-\frac{1}{2}$,
Let $h(x, y) = (-y - \frac{1}{2},  x + \frac{1}{2})$, and
$k(x, y) = (x, -y)$ which are
invertible affine maps.
Then $H_2 h = k Q_2$.

\item $Q_1 \rightarrow Q_3=(x^2+y^2, xy+x)$.
Do if $a_{10} = 0$ and $b_{10} \ne 0$.
Let $h(x, y) = (\frac{1}{b_{10}} x,  \frac{1}{b_{10}} y)$, and
$k(x, y) = (\frac{1}{b_{10}^2} x , \frac{1}{b_{10}^2} y )$.
Then $Q_3 h = k Q_1 $.

\item $Q_3 \rightarrow H_1=(x^2+y^2+x, xy)$.
Let $h(x, y) = (\frac{-1}{2}y - \frac{1}{2},  \frac{-1}{2}x)$, and
$k(x, y) = (\frac{1}{4}x - \frac{1}{4}, \frac{1}{4}y )$.
Then $H_1 h = k Q_3 $.

\item $Q_1 \rightarrow H_3=(x^2+y^2, xy)$.
Do if $a_{10} = 0$ and $b_{10} = 0$.
Then $Q_1$ already equals $H_3$.

\end{itemize}

This, along with the proof of Lemma \ref{lemma-hyp-longcase} below, completes the proof of Lemma \ref{lemma-hyperbolic}.
$\Box$

\begin{lemma}
\label{lemma-hyp-longcase}

Let $Q_2=(x^2+y^2+x, xy+b_{10}x)$ and
$H_1=(x^2+y^2+x, xy)$.
Assume $b_{10}x \neq 0,\pm \frac{1}{2}$.
Then $Q_2$ and $E_1$ are affinely map equivalent.
\end{lemma}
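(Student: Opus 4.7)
The plan is to mirror the proof of the elliptic analogue in Lemma~\ref{lemma-ell-longcase}. I will set
\[
h(x,y) = (p_0 x + q_0 y + u_0,\; r_0 x + s_0 y + v_0),
\qquad
k(x,y) = (p x + q y + u,\; r x + s y + v),
\]
as general candidate affine maps and impose the polynomial identity $H_1 \circ h = k \circ Q_2$. Matching coefficients of every monomial of degree at most two in each component will produce a system of twelve polynomial equations in the twelve unknowns. As in the elliptic case, $u$ and $v$ will each occur in exactly one equation (the constant-term equations) and can be solved for at the very end, so I will work with a ten-equation, ten-unknown subsystem.

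I anticipate that the $x^2$ and $y^2$ coefficient equations will yield $p_0^2 + r_0^2 = p = q_0^2 + s_0^2$ and $p_0 r_0 = r = q_0 s_0$. These are cleanly satisfied by the ansatz $s_0 = p_0$, $r_0 = q_0$ (the natural analogue of the elliptic choice, with signs appropriate to $x^2+y^2$ rather than $x^2-y^2$). Under this ansatz the $xy$ equations should force $q=4r$ and $s=p$, and the two $y$-coefficient equations should form a $2\times 2$ linear system in $u_0, v_0$ with determinant $q_0^2 - p_0^2$, uniquely solvable whenever $p_0^2 \ne q_0^2$.

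Substituting these expressions into the two remaining $x$-coefficient equations will leave a coupled polynomial system in only $p_0$ and $q_0$, parametrized by $b_{10}$. I expect that eliminating $p_0^2 + q_0^2$ between them will yield the linear relation
\[
p_0\bigl[(1-4 b_{10}^2) q_0 + b_{10}\bigr] \;=\; -\tfrac{1}{2} q_0,
\]
in which the coefficient of $q_0$ vanishes precisely when $b_{10} = \pm 1/2$. This is exactly where the hypothesis is used, and is consistent with those excluded values belonging to the inequivalent class $H_2$. Solving for $p_0$ in terms of $q_0$ and back-substituting should reduce the entire problem to a single cubic in $q_0$, which automatically has at least one real root.

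The main obstacle will be verifying that the chosen root produces nondegenerate data: specifically $q_0 \ne 0$, $p_0 \ne 0$, and $p_0^2 \ne q_0^2$. The first two should follow from the constant term of the cubic being a nonzero multiple of $b_{10}$, which is nonzero by hypothesis. The third condition cuts out only an algebraic subset in the $b_{10}$-parameter space; on it I would switch to a different root of the cubic (a cubic has up to three real roots, and the $p_0 = \pm q_0$ constraint can rule out at most two), or, for the finitely many truly exceptional $b_{10}$, fall back on a different ansatz. Once $p_0^2 \ne q_0^2$ holds, a direct computation will give $\det Dh = p_0^2 - q_0^2$ and $\det Dk = p^2 - 4r^2 = (p_0^2 - q_0^2)^2$, so nonsingularity of both affine maps comes for free, completing the affine map equivalence of $Q_2$ and $H_1$.
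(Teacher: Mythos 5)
Your proposal follows essentially the same route as the paper's proof: the same twelve-equation coefficient match for $H_1\circ h = k\circ Q_2$, the same ansatz $s_0=p_0$, $r_0=q_0$, the same consequences $q=4r$, $s=p$, a reduction to a linear relation (your relation in $(p_0,q_0)$ is exactly the paper's formula $r = \frac{2b_{10}p_0^2}{(8b_{10}^2-2)p_0-1}$ rewritten via $r=p_0q_0$) and then to a single cubic whose relevant coefficient, $(8b_{10}^2-2)^2$, degenerates precisely at $b_{10}=\pm\frac{1}{2}$, finishing with $\det Dh = p_0^2-q_0^2$ and $\det Dk=(p_0^2-q_0^2)^2$. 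The only spot where you are vaguer than necessary is the check $p_0^2\neq q_0^2$: instead of switching roots or changing ansatz, one can substitute $q_0=\pm s_0$ into the cubic and verify directly that this forces $b_{10}\in\{0,\pm\frac{1}{2}\}$, which the hypothesis excludes.
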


\begin{proof}
Analogous to the proof of Lemma \ref{lemma-ell-longcase},
we try to solve for affine invertible maps $h(x, y) = (p_0 x + q_0 y + u_0, r_0 x + s_0 y + v_0)$, and
$k(x, y) = (p x + q y + u, r x + s y + v)$ such that $H_1 h = k Q_2$. The proof of this Lemma almost exactly parallels the proof of Lemma \ref{lemma-ell-longcase}, except for the condition that $b_{10} \ne \pm \frac{1}{2}$ or $0$.
Consequently, we skip the details in the substitutions which take us from the original twelve equations (matching corresponding terms in $H_1 h = k Q_2$) and twelve unknowns (the coefficients defining $h$ and $k$) through the reduction to a single cubic equation in $p_0$, analogous to eq.(\ref{ellipse_p_0}), which we can use to solve for a positive solution $p_0$ in terms of the fixed parameter $b_{10}$.

That is, we need a positive solution to 

\begin{equation}\label{hyper_p_0}
f(p_0) := (64 b_{10}^4 - 32 b_{10}^2 + 4) p_0^3 + (12 b_{10}^2 - 3) p_0 - 1 = 0.
\end{equation}

Note that
$f(0) = -1 < 0$.
Further, the cubic coefficient, which can be factored as 
$(8b_{10}^2-2)^2$ is positive unless $b_{10}=\pm \frac{1}{2}$.
Therefore, $f(p_0)$ is positive for sufficiently large $p_0$.
The intermediate value theorem now guarantees a positive root of  $f(p_0)$.

Once we have this positive solution for $p_0$, back substitution yields the full solution:

$r = \frac{2 b_{10} p_0^2}{(8 b_{10}^2 - 2) p_0-1}$,

$r_0 =\frac{r}{p_0} = \frac{ (2 b_{10} p_0)}{(8 b_{10}^2 - 2) p_0-1}$,

$q_0 = r_0 =\frac{ (2 b_{10} p_0)}{(8 b_{10}^2 - 2) p_0-1}$,

$p = p_0^2 + r_0^2 $,

$s = p = p_0^2 + r_0^2 $,

$q = 4 r =  \frac{8 b_{10} p_0^2}{(8 b_{10}^2 - 2) p_0-1}$,

$s_0 = p_0$,

$u_0 = \frac{- q_0^2}{2(q_0^2 - s_0^2)}$,

$v_0 = \frac{s_0 q_0}{2(q_0^2 - s_0^2)}$,

$u = v_0^2+u_0^2+u_0 $,

$v = u_0 v_0$.

For any given $b_{10} \neq 0,\pm\frac{1}{2} \in \mR$,
it can be shown that the denominators in the above expressions are nonzero, and that this solution is yields nonsingular affine maps $h$ and $k$
such that $H_1 h = k Q_2$.
That is $Q_2$ is affinely equivalent to $H_1$ for any $b_{10} \in \mR$, $b_{10} \ne \pm \frac{1}{2},0$.

\end{proof}

\subsection{Proof of Lemma \ref{lemma-parabolic}: Parabolic case}

This Lemma treats all cases with homogeneous quadratic part: $(x^2, xy)$.

\begin{table}[ht]
\label{table-par-org}
\begin{tabular}{ccccccccc} % 9 cols
$Q_0$&$\rightarrow$&$Q_1$&$\rightarrow$ &$Q_2$ &$\rightarrow$ &$\framebox[.8cm][l]{$P_1$}$\\
&&$\downarrow$&$\searrow$\\
&&$\framebox[.9cm][l]{$P_3$}$&&$\framebox[.8cm][l]{$P_2$}$ \\

\end{tabular}
\caption{Organization of the coordinate changes to prove Lemma \ref{lemma-parabolic}.  The boxed forms are ``normal forms'' that appear in the list of eighteen equivalence class representatives in Table \ref{table-ame}. All have the same homogeneous part.}
\end{table}

\begin{itemize}

\item Begin with $Q_0(x,y)=(x^2+a_{10}x+a_{01}y, xy+b_{10}x+b_{01}y)$.

\item $Q_0 \rightarrow Q_1=(x^2+a_{01}y, xy+b_{01}y)$: 
Let $h(x, y) = (x - a_{10}/2,\; y - b_{10})$.
Then $Q_0 h$ is of form $Q_1$.

\item $Q_1 \rightarrow Q_2=(x^2+y, xy+b_{01}y)$.
Do if $a_{01} \neq 0$.
Let $h(x, y) = (\frac{1}{a_{01}} x,  \frac{1}{a_{01}} y)$, and
$k(x, y) = (\frac{1}{a_{01}^2} x , \frac{1}{a_{01}^2} y )$.
Then $h^{-1} = (a_{01} x,  a_{01} y)$, so
$k Q_1 h^{-1}(x, y) = k Q_1(a_{01} x,  a_{01} y) 
= k(a_{01}^2(x^2 + y), a_{01}^2 xy + a_{01} b_{01} y)
= (x^2 + y, xy + (b_{01}/a_{01}) x)$
which is of form $Q_2$.

\item $Q_2 \rightarrow P_1=(x^2+y, xy)$.

Let $h(x, y) = (x + \frac{b_{01}}{3},  \frac{-2 b_{01}}{3} x + y + \frac{2 b_{01}^2}{9})$, and
$k(x, y) = (x + \frac{b_{01}^2}{3}, \frac{-2 b_{01}}{3} x + y + \frac{2 b_{01}^3}{27})$.
Then $P_1 h = k Q_2 $.

\item $Q_1 \rightarrow P_2=(x^2, xy+y)$.
Do if $a_{01}=0$ and $b_{01} \neq 0$.
Let $h(x, y) = (\frac{1}{b_{01}} x, y)$, and
$k(x, y) = (\frac{1}{b_{01}^2} x, \frac{1}{b_{01}} y)$ which are
invertible affine maps.
Then $P_2 h = k Q_1$.

\item $Q_1 \rightarrow P_3=(x^2, xy)$.
Do if $a_{01} = 0$ and $b_{01} = 0$.
Then $Q_1$ already equals $P_3$.

\end{itemize}

This completes the proof of Lemma \ref{lemma-parabolic}.
$\Box$
%%%%%%%%%%%%%%%%%%%%%%%%%%%%%%%%%%%%%%%%%%%%%%%%%%%%%
\subsection{Proof of Lemma \ref{lemma-deg-elliptic}  $(x^2 - y^2,\;  0)$} % (\ref{SN5})}

This Lemma treats all cases with homogeneous quadratic part: $(x^2-y^2, 0)$.

\begin{table}[ht]
\label{table-deg-ell-org}
\begin{tabular}{ccccccccc} % 9 cols
$Q_0$&$\rightarrow$&$Q_1$&$\rightarrow$ &$Q_2$ &$\rightarrow$ &$\framebox[.8cm][l]{$D^E_1$}$\\
&&$\downarrow$&$\searrow$&&$\searrow$\\
&&$\framebox[.9cm][l]{$D^E_3$}$&&$\framebox[.8cm][l]{$D^E_1$}$&&$\framebox[.8cm][l]{$D^E_2$}$ \\

\end{tabular}
\caption{Organization of the coordinate changes to prove Lemma \ref{lemma-elliptic}.  The boxed forms are ``normal forms'' that appear in the list of eighteen equivalence class representatives in Table \ref{table-ame}. All have the same homogeneous part.}
\end{table}

\begin{itemize}

\item Begin with $Q_0(x,y)=(x^2-y^2+a_{10}x+a_{01}y, b_{10}x+b_{01}y)$.

\item $Q_0 \rightarrow Q_1=(x^2-y^2, b_{10}x+b_{01}y)$.
Let $h(x, y) = (x - a_{10}/2,\; y + a_{01}/2)$.
Then $Q_0 h$ is of form $Q_1$.

\item $Q_1 \rightarrow Q_2=(x^2-y^2, x+b_{01}y)$.
Do if $b_{10} \neq 0$.
Let $k(x, y) = (x , y/b_{10} )$.
Then $k Q_1$ is of form $Q_2$.

\item $Q_2 \rightarrow D^E_1=(x^2-y^2,y)$.
Do if $b_{01} \neq \pm 1$.

Let $h(x, y) = (b_{01} x + y, x + b_{01} y)$, and
$k(x, y) = ( (b_{01}^2 - 1) x, y)$ which are
invertible affine maps if $b_{01} \neq \pm 1$.
Then $D^E_1 h = k Q_2$.

\item $Q_2 \rightarrow D^E_2=(x^2-y^2,x+y)$.
Do if $b_{01} = \pm 1$.
If $b_{01}=1$, then $Q_2$ is already equal to $D^E_2$.
If $b_{01}=-1$, let $h(x,y)=(x,-y)$.
Then $Q_2 h = D^E_2$.

\item $Q_1 \rightarrow D^E_1=(x^2-y^2, y)$.
Do if $b_{10}=0$ and $b_{01} \neq 0$.
Let $k(x, y) = (x , y/b_{01} )$.
Then $k Q_1 = D^E_1$.

\item $Q_1 \rightarrow D^E_3=(x^2-y^2,0)$.
Do if $b_{10} = 0$ and $b_{01} = 0$.
Then $Q_1$ already equals $D^E_3$.

\end{itemize}

This completes the proof of Lemma \ref{lemma-deg-elliptic}.
$\Box$

%%%%%%%%%%%%%%%%%%%%%%%%%%%%%%%%%%%%%%%%%%%%%%%%%%%%%%%%%%%%%%%%%%
%\begin{comment}

\subsection{ Proof of Lemma \ref{lemma-deg-hyperbolic} }

This Lemma treats all cases with homogeneous quadratic part: $(x^2+y^2, 0)$.
We start from $Q_0(x,y)=(x^2+y^2+a_{10}x +a_{01}y, b_{10}x +b_{01}y)$.

\begin{table}[ht]
\label{table-deg-hyp-org}
\begin{tabular}{ccccccccc} % 9 cols
$Q_0$&$\rightarrow$&$Q_1$&$\rightarrow$ &$Q_2$ &$\rightarrow$ & $Q_3$&$\rightarrow$&$Q_4$\\
& &$\downarrow$&$\searrow$&&&&&$\downarrow$\\
& &$\framebox[.9cm][l]{$D^H_2$}$&
&$\framebox[.8cm][l]{$D^H_1$}$
&&&&$\framebox[.8cm][l]{$D^H_1$}$ \\

\end{tabular}
\caption{Organization of the coordinate changes to prove Lemma \ref{lemma-deg-hyperbolic}.  The boxed forms are ``normal forms'' that appear in the list of eighteen equivalence class representatives in Table \ref{table-ame}. All have the same homogeneous part.}
\end{table}

\begin{itemize}

\item Begin with $Q_0(x,y)=(x^2+y^2+a_{10}x+a_{01}y, b_{10}x+b_{01}y)$.

\item $Q_0 \rightarrow Q_1=(x^2+y^2, b_{10}x+b_{01}y)$.
Let $h(x, y) = (x - a_{10}/2,\; y - a_{01}/2)$.
Then $Q_0 h$ is of form $Q_1$.

\item $Q_1 \rightarrow Q_2=(x^2+y^2, x+b_{01}y)$.
Do if $b_{10} \neq 0$.
Let $k(x, y) = (x , y/b_{10} )$.
Then $k Q_1$ is of form $Q_2$.

\item $Q_2 \rightarrow Q_3=(x^2+y^2,x+y)$.
If $b_{01} = 1$, $Q_2 = Q_3$. They are affinely map equivalent.

If $b_{01} \neq 1$, let $h(x, y) = (x - \frac{b_{01}+1}{b_{01} - 1} y, - \frac{b_{01}+1}{b_{01} - 1}x - y)$, and
$k(x, y) = ( \frac{2(b_{01}^2+1)}{(b_{01} - 1)^2} x, -\frac{2}{b_{01} - 1} y)$.
Then $Q_3 h = k Q_2$.

\item $Q_3 \rightarrow Q_4=(x^2+y^2,x)$.
Note that $Q_4 = Q_2|_{b_{01}=0}$.
So the same $h$ and $k$ as in the previous step (also with $b_{01}=0$) gives $Q_3 h = k Q_4$.

\item $Q_4 \rightarrow D^H_1=(x^2+y^2, y)$.
Let $h(x,y)=(y,x)$.
Then $Q_4 h = D^H_1$.

\item $Q_1 \rightarrow D^H_1=(x^2+y^2, y)$.
Do if $b_{10}=0$ and $b_{01} \neq 0$.
Let $k(x, y) = (x , y/b_{01} )$.
Then $k Q_1 = D^H_1$.

\item $Q_1 \rightarrow D^H_2=(x^2+y^2,0)$.
Do if $b_{10} = 0$ and $b_{01} = 0$.
Then $Q_1$ already equals $D^H_2$.

\end{itemize}

This completes the proof of Lemma \ref{lemma-deg-hyperbolic}.
$\Box$

%%%%%%%%%%%%%%%%%%%%%%%%%%%%%%%%%%%%%%%%%%%%%%%%
\subsection{Proof of Lemma \ref{lemma-deg-parabolic} }  %(\ref{SN6})}

This Lemma treats all cases with homogeneous quadratic part: $(x^2, 0)$.
\begin{center}
\begin{table}[ht]
\label{table-deg-par-org}
\begin{tabular}{ccccccccc} % 9 cols
&&&&$\framebox[.9cm][l]{$D^P_1$}$
&
&$\framebox[.9cm][l]{$D^P_4$}$\\
&&&&$\uparrow$&$\nearrow$\\
$Q_0$&$\rightarrow$&$Q_1$&$\rightarrow$ &$Q_2$ &$\rightarrow$
&$Q_3$&$\rightarrow$&$\framebox[.8cm][l]{$D^P_2$}$\\
&$\swarrow$&$\downarrow$&$\searrow$\\
$\framebox[.9cm][l]{$D^P_5$}$
&&$\framebox[.9cm][l]{$D^P_3$}$
&&$Q_4$&$\rightarrow$&$\framebox[.8cm][l]{$D^P_2$}$ \\

\end{tabular}
\caption{Organization of the coordinate changes to prove Lemma \ref{lemma-elliptic}.  The boxed forms are ``normal forms'' that appear in the list of eighteen equivalence class representatives in Table \ref{table-ame}. All have the same homogeneous part.}
\end{table}
\end{center}

\begin{itemize}

\item Begin with $Q_0(x,y)=(x^2+a_{10}x+a_{01}y, b_{10}x+b_{01}y)$.

\item $Q_0 \rightarrow Q_1=(x^2+a_{01}y, b_{10}x+b_{01}y)$.
Let $h(x, y) = (x - a_{10}/2,y)$.
Then $Q_0 h$ is of form $Q_1$.

\item $Q_1 \rightarrow Q_2=(x^2+a_{01}y, x+b_{01}y)$.
Do if $b_{10} \ne 0$.
Let $k(x, y) = (x, y/b_{10})$.
Then $k Q_1$ is of form $Q_2$.

\item $Q_2 \rightarrow Q_3=(x^2+a_{01}y,x+y)$.
Do if $b_{01} \ne 0$.
Let $h(x, y) = (x, y/b_{01})$.
Then $Q_2 h$ is of form $Q_3$.

\item $Q_3 \rightarrow D^P_2=(x^2,y)$.
Let $h(x, y) = (2x - a_{01}, x+y)$, and $k(x, y) = (4x - 4 a_{01} y + a_{01}^2, y)$.
Then $D^P_2 h = k Q_3$.

\item $Q_1 \rightarrow Q_4=(x^2+a_{01}y, y)$.
Do if $b_{10} =0$ and $b_{01}\ne 0$.
Let $k(x, y) = (x, y/b_{01})$.
Then $k Q_1 =Q_4$. 

\item $Q_4 \rightarrow D^P_2=(x^2,y)$.
Let $k(x, y) = (x -  a_{01} y, y)$.
Then $D^P_2 = k Q_4$.

\item $Q_2 \rightarrow D^P_1=(x^2+y, x)$.
Do if $b_{01}=0$ but $a_{01} \ne 0$.
Let $h(x, y) = (x, y/a_{01})$.
Then $Q_2 h =D^P_1$. 
%{\color{red} Bruce: Some mistake here. Please check.}

\item $Q_1 \rightarrow D^P_3=(x^2+y, 0)$.
Do if $b_{10}=0, b_{01} =0$ and $a_{01}\ne 0$.
Let $h(x, y) = (x, y/a_{01})$.
Then $Q_1 h=D^P_3$.
%{\color{red} Bruce:  $(x^2+y, x)$ is not $D^P_3$. Please check.}

\item $Q_2 \rightarrow D^P_4=(x^2, x)$.
Do if $a_{01} =0$ and $b_{01} = 0$.
Then $Q_2$ is already equal to $D^P_4$.

\item $Q_1 \rightarrow D^P_5=(x^2,0)$.
Do if $a_{01}, b_{10}$ and $b_{01}$ are all zero.
Then $Q_1$ is already equal to $D^P_5$.

\end{itemize}

This completes the proof of Lemma \ref{lemma-deg-parabolic}.
$\Box$

%%%%%%%%%%%%%%%%%%%%%%%%%%%%%%%%%%%%%%%%%%%%%%%%%%%%%%%%%%%%%%%%%%%

\section{Acknowledgments}
 Thanks to Bernd Krauskopf and Hinke Osinga for discussions related to this project and to a related project in preparation \cite {PKON} dealing with the parameter space for quadratic maps of the plane, and transitions  between affine map equivalence classes.
We acknowledge M. Golubitsky for pointing out the 
 language of algebraic orbits stated in Sec. \ref{ss-algebraic}.
%%%%%%%%%%%%%%%%%%%%%%%%%%%%%%%%%%%%%%%%%%

%%%%%%%%%%%%%%%%%%%%%%%%%%%%%%%%%%%%%%%%%%%%%%%%%%%%%%%%%%%%%%%

\clearpage
%%%%%%%%%%%%%%%%%%%%%%%%%%%%%%%%%%%%%%%%%%%%%%%%%%%%%%%%%%%%%%%

%%%%%%%%%%%%%%%%%%%%%%%%%%%%%%%%%%%%%%%%%%%%%%%%%%%%%%%%%%%%%%%
\clearpage

\end{document}